\renewcommand*{\HyperDestNameFilter}[1]{\jobname-#1} 
\numberwithin{equation}{section}
\newcommand{\sspace}{\vspace{0.25cm}}
 \theoremstyle{plain}
\newtheorem{theor}{Theorem}[section]
\newtheorem{prop}[theor]{Proposition}
\newtheorem{lem}[theor]{Lemma}
\theoremstyle{remark}
\newtheorem{rem}[theor]{Remark}
\newtheorem{rems}[theor]{Remarks}
\newtheorem{Example}[theor]{Example}
\theoremstyle{plain}
\newtheorem{defi}[theor]{Definition}
\numberwithin{equation}{section}
\newcommand{\CC}{{\mathbb C}}
\newcommand{\RR}{{\mathbb R}}
\newcommand{\QQ}{{\mathbb Q}}
\newcommand{\FF}{{\mathcal{ F}}}
\newcommand{\ZZ}{{\mathbb Z}}
\newcommand{\VV}{{\mathbb V}}
\newcommand{\G}{{\mathbf G}}
\newcommand{\HH}{{\mathbf H}}
\newcommand{\PP}{{\mathbf P}}
\newcommand{\Z}{{\mathbf Z}}
\newcommand{\C}{{\mathbf C}}
\newcommand{\NN}{{\mathbb N}}
\newcommand{\Ga}{\Gamma}
\newcommand{\HL}{\textnormal{HL}}
\newcommand{\inter}{\textnormal{Int}}
\newcommand{\ti}[1]{\mbox{$\tilde{#1} $}}
\newcommand{\ol}{\overline}
\newcommand{\lo}{\longrightarrow}
\newcommand{\End}{{\rm End}\,}
\newcommand{\ad}{{\rm ad}}
\newcommand{\GL}{{\rm \bf GL}}
\newcommand{\MT}{{\rm \bf MT}}
\newcommand{\g}{{\mathfrak{g}}}
\newcommand{\alg}{\textnormal{alg}}
\newcommand{\F}{\mathcal{F}}
\newcommand{\an}{\textnormal{an}}
\newcommand{\bH}{{\mathbf H}}
\def\FS{\mathfrak{S}}
\def\FH{\mathfrak{H}}
\def\Fg{\mathfrak{g}}
\def\Fn{\mathfrak{n}}
\newcommand{\cF}{\mathcal{F}}
\newcommand{\cA}{{\mathcal A}}
\newcommand{\cR}{{\mathcal R}}
\newcommand{\cS}{{\mathcal S}}
\newcommand{\cO}{{\mathcal O}}
\newcommand{\bP}{\mathbf P}
\newcommand{\bQ}{\mathbf Q}
\newcommand{\bL}{\mathbf L}
\newcommand{\bN}{\mathbf N}
\newcommand{\bM}{\mathbf M}
\newcommand{\bS}{\mathbf S}
\newcommand{\bz}{\mathbf z}
\newcommand{\oQ}{\overline{\QQ}}
\newcommand{\SL}{{\mathbf{SL}}}
\newcommand{\Hod}{\textnormal{Hod}}
\newcommand{\Reel}{\textnormal{Re}\,}
\newcommand{\Ima}{\textnormal{Im}\,}
\newcommand{\cl}{\textnormal{cl}}
\begin{document}
\title{Tame topology of arithmetic quotients and algebraicity of Hodge
loci}
\author{B. Bakker}
\author{B. Klingler}
\author{J. Tsimerman}

\thanks{B.K. was partially supported by an Einstein Foundation's
  professorship.\\\indent B.B. was partially supported by NSF grants DMS-1702149 and DMS-1848049. }

\begin{abstract}
In this paper we prove the following results:

$1)$ We show that any arithmetic quotient of a homogeneous space admits a natural real 
semi-algebraic structure for which its Hecke correspondences are
semi-algebraic.  A particularly important example is given by Hodge varieties, which parametrize pure polarized integral Hodge structures.

$2)$ We prove that the period map associated to any pure polarized
variation of integral Hodge structures $\VV$ on a smooth complex 
quasi-projective variety $S$ is definable with respect to an o-minimal structure on the relevant Hodge variety induced by the above semi-algebraic structure. 

$3)$ As a corollary of $2)$ and of Peterzil-Starchenko's
o-minimal Chow theorem we recover that the Hodge locus of
$(S, \VV)$ is a countable union of algebraic subvarieties of $S$, a
result originally due to Cattani-Deligne-Kaplan. Our approach
simplifies the proof of Cattani-Deligne-Kaplan, as it does not use 
the full power of the difficult multivariable
$SL_2$-orbit theorem of Cattani-Kaplan-Schmid.
\end{abstract}

\maketitle

\section{Introduction.}

\subsection{Arithmetic quotients}
Arithmetic quotients are real analytic manifolds of the form
$S_{\Gamma,G, M}:= \Gamma \backslash G/M$, for $\G$ a connected
semi-simple linear algebraic $\QQ$-group, $G:= \G(\RR)^+$ the real Lie group
connected component of the identity of $\G(\RR)$, $M \subset G$ a
connected compact subgroup and $\Gamma \subset \G(\QQ)^+:= \G(\QQ) \cap G$
a neat arithmetic group. By a morphism of arithmetic quotients
we mean a real analytic map $(\phi, g): S_{\Gamma', G', M'} \lo S_{\Gamma,G, M}$
of the form $\Gamma' h'M' \mapsto \Gamma \phi(h')  g M$ for 
some morphism $\phi: \G' \lo \G$ of semi-simple linear algebraic
$\QQ$-groups and some element $g \in G$ such that $\phi(\Ga') \subset
\Ga$ and $\phi(M') \subset gMg^{-1}$. 
 
Such quotients are ubiquitous in various parts of
mathematics. For $M= \{1\}$ the arithmetic quotients $
S_{\Gamma, G, \{1\}}= \Gamma \backslash G$ are the main players in homogeneous dynamics, for example Ratner's
theory \cite{Rat0}, \cite{Rat1}. For $K\subset G$ a maximal compact subgroup the
arithmetic quotients $S_{\Gamma, G, K}$ are the arithmetic riemannian locally symmetric
spaces, for instance the arithmetic hyperbolic manifolds $\Gamma
\backslash SO(n,1)^+/SO(n)$. They are intensively studied by differential geometers and group
theorists. When $G$ is moreover of Hermitian type then $S_{\Gamma, G,K}$ is a so-called 
arithmetic variety (also called a connected Shimura variety if $\Gamma$ is a
congruence subgroup): this is a smooth complex quasi-projective variety,
naturally defined over $\oQ$ in the Shimura case. The simplest
examples of connected Shimura varieties are the modular
curves $\Gamma_0(N) \backslash SL(2, \RR)/ SO(2)$. Connected Shimura varieties play a paramount role in
arithmetic geometry and the Langlands program. Much more generally,
the connected Hodge varieties are arithmetic quotients which play a crucial role in
Hodge theory as targets of period maps.

\subsection{Moderate geometry of arithmetic
  quotients} \label{tameness}

For $S_{\Ga, G, K}$ a connected Shimura variety, the study of the topological 
tameness properties of the uniformization map $\pi: G/M \lo S_{\Ga, G,
K}$ recently provided a crucial tool for solving longstanding
algebraic and arithmetic questions  (see \cite{Pil}, \cite{PT},
\cite{KUY}, \cite{Tsi}, \cite{KUY18},
\cite{MPT}). Here tameness is understood in the sense 
proposed by Grothendieck \cite[\textsection 5 and 6]{Gro} and
developed by model theory under the name ``o-minimal structure'' (see
below). The first goal of this paper is to develop a similar 
study for a general arithmetic quotient $S_{\Ga, G, M}$. 

Among real analytic manifolds the ones with the tamest geometry
are certainly the complex algebraic ones. However most
arithmetic quotients have no complex algebraic  
structures, as they do not even admit a complex analytic one (for 
instance for obvious dimensional reasons). What about a real algebraic
structure? In \cite{Rag} Raghunathan proved that any riemannian locally symmetric
space is compactifiable, i.e. diffeomorphic to the interior of a compact smooth manifold
with boundary; Akbulut and King \cite{AK} proved that any
compactifiable manifold is diffeomorphic to a non-singular real
algebraic set (generalizing a result of Tognoli \cite{Tog} in the
compact case, conjectured by Nash \cite{Nash}). Hence any riemannian
locally symmetric space is diffeomorphic to a non-singular
semi-algebraic set. On the other hand such  
abstract real algebraic models are useless if they don't satisfy some basic
functorial properties. A crucial feature of the geometry of arithmetic quotients is the
existence of infinitely many real-analytic finite self-correspondences: any
element $g \in \G(\QQ)$ commensurates $\Gamma$ (meaning that the
intersection $g\Gamma g^{-1} \cap \Gamma$ is of finite index in both
$\Gamma$ and $g\Gamma g^{-1}$) hence defines a Hecke correspondence 
\begin{equation} \label{Hecke}
\xymatrix{
 c_g = (c_1, c_2) :& S_{\Ga, G, M} &S_{g^{-1}\Gamma g \cap \Gamma, G, M} \ar[l]^>>>>>{c_1} 
\ar[r]^{g \cdot} \ar@/_1pc/[rr]_{c_2} & S_{\Gamma \cap g \Gamma g^{-1}, G, M}
\ar[r]&S_{\Ga, G, M}}\;\;.
\end{equation}
Here the left and right morphisms of arithmetic quotients are the natural finite \'etale
projections; the map in the middle is left-multiplication by $g$,
i.e. the morphism of arithmetic quotients $(\inter(g), g)$, where
$\inter(g):\G \lo \G$ denotes the conjugation by $g$. We
would like these Hecke correspondences to
be real algebraic. Such functorial real algebraic models do exist in certain cases: see
\cite{Jaf75}, \cite{Jaf78}, \cite{Le79}; but we don't know of any general procedure for producing
such a nice real algebraic structure on all arithmetic quotients. Hence our
need to work with a more general notion of tame geometry.

Recall that a structure $\cS$ on $\RR$ expanding the real field is a collection $(S_n)_{n \in \NN^*}$,
where $S_n$ is a set of subsets of $\RR^n$ (called the {\em $\cS$-definable
sets}), such that: all algebraic subsets of $\RR^n$ are in $S_n$; $S_n$ is  a boolean subalgebra of the power set of $\RR^n$;
if $A\in S_n$ and $B \in S_m$ then $A \times B \in
S_{n+m}$; if $p: \RR^{n+1} \lo \RR^n$ is a linear projection and $A
\in S_{n+1}$ then $p(A) \in S_n$. A function $f: A \lo B$ between
$\cS$-definable sets is
said to be $\cS$-definable if its graph 
is $\cS$-definable. Such a structure $\cS$ is said in addition to be {\em o-minimal} if the definable subsets of
$\RR$ are precisely the finite unions of points and intervals
(i.e. the semi-algebraic subsets of $\RR)$. This o-minimal axiom guarantees the possibility of doing
geometry using definable sets as basic blocks: it excludes infinite
countable sets, like $\ZZ \subset \RR$, as well as Cantor sets or
space-filling curves, to be
definable. Intuitively, subsets of
$\RR^n$ definable in an o-minimal structure are the ones having at
the same time a reasonable local topology and a tame topology at
infinity. Given an o-minimal structure $\cS$, there is an obvious
notion of $\cS$-definable manifold: this is a manifold $S$ admitting a {\em
  finite} atlas of charts $\varphi_i: U_i \lo \RR^n$, $i \in I$, such
that the intersections $\varphi_i(U_i \cap U_j)$, $i, j \in I$, are
$\cS$-definable subset of $\RR^n$ and the change of coordinates
$\varphi_i \circ \varphi_j^{-1}: \varphi_j(U_i \cap U_j) \lo
\varphi_i(U_i \cap U_j)$ are $\cS$-definable maps. 

The simplest o-minimal structure is $\RR_{\alg}$, the definable sets being the
semi-algebraic subsets. There exist more general o-minimal structures. A result of Van den
Dries based on Gabrielov's results \cite{Gabrielov} shows that the
structure $$\RR_\an:=\langle \RR,\, +, \,\times,\, <, \{f\} \, 
\textnormal{for} \, f \,\textnormal{restricted analytic function}
\rangle$$ generated from $\RR_\alg$ by adding the restricted analytic
functions is o-minimal. Here a real function on $\RR^n$ is restricted analytic
if it is zero outside $[0,1]^n$ and coincides on $[0,1]^n$ with a real analytic function
$g$ defined on a neighbourhood of $[0,1]^n$. The $\RR_\an$-definable
sets of $\RR^n$ are the globally subanalytic subsets of $\RR^n$
(i.e. the ones which are subanalytic in the compactification $\bP^n
\RR$ of $\RR^n$). A deep result of Wilkie
\cite{Wil} states that the structure $\RR_{\exp}:= \langle \RR, +,
\times, <, \exp: \RR \lo \RR \rangle$ generated from $\RR_{\alg}$ by making the real
exponential function definable is also o-minimal. Finally the
structure $\RR_{\an, \exp}:= \langle \RR,\, +, \,\times,\, <,
\,\exp, \{f\} \, \textnormal{for} \,  f \,\textnormal{restricted
  analytic function} \rangle$ 
generated by $\RR_{\an}$ and $\RR_{\exp}$ is still o-minimal \cite{VdM}.

\medskip
The first result of this paper is the following:
\begin{theor}
\label{definability}
Let $\G$ be a connected linear semi-simple algebraic $\QQ$-group,
$\Gamma \subset \G(\QQ)^+$  a torsion-free arithmetic lattice 
 of $G:= \G(\RR)^+$, and $M \subset G$ a connected compact subgroup.
\begin{itemize}
\item[(1)] \label{unif}
The arithmetic quotient $S_{\Gamma, G, M} := \Gamma \backslash G/M$ admits a natural structure of
$\RR_{\alg}$-definable manifold, characterized by the following
property. Let $G/M$ be endowed with its natural semi-algebraic
  structure (see \Cref{G/M semi}) and $\FS \subset G/M$ be a
  semi-algebraic Siegel set (see \Cref{Siegel sets} for the definition
  of Siegel sets). Then 
$$\pi_{|\FS}: \FS  \lo  S_{\Gamma,G, M} $$ is $\RR_{\alg}$-definable.

In particular, there exists a semi-algebraic fundamental set $\FF \subset G/M$ for the
  action of $\Gamma$ on $G/M$ such that 
$$\pi_{|\FF}: \FF \lo S_{\Gamma,G, M} $$ is $\RR_{\alg}$-definable. 

The structure of $\RR_\an$-definable manifold on $S_{\Gamma, G, M}$
extending its $\RR_{\alg}$-structure is the one induced by the
real-analytic structure with corners of its Borel-Serre compactification
$\overline{S_{\Ga, G, M}}^{BS}$. 
\item[(2)] Any morphism $f: S_{\Ga', G', M'} \lo S_{\Ga, G, M}$ of
  arithmetic quotients is $\RR_\alg$-definable. In particular the Hecke
  correspondences $c_g$, $g \in \G(\QQ)^+$, on $S_{\Ga, G, M}$ are $\RR_{\alg}$-definable. 
\end{itemize}
\end{theor}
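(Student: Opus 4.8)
The plan is to build the $\RR_{\alg}$-definable structure on $S_{\Gamma,G,M}$ directly from a semi-algebraic Siegel set and the reduction theory of arithmetic groups, and then to check definability of morphisms via the same Siegel sets. First I would recall (or set up in the referenced sections) that $G/M$ carries a canonical semi-algebraic structure: $G = \mathbf G(\RR)^+$ sits inside some $\GL_N(\RR)$ as a real algebraic group, $M$ is a compact (hence algebraic) subgroup, and the quotient $G/M$ is a real algebraic variety by Rosenlicht-type quotient theorems, so in particular a semi-algebraic set; moreover left translation by any fixed $g\in\G(\QQ)$ is an algebraic automorphism of $G/M$. Next I would invoke reduction theory (Borel, Harish-Chandra): there is a Siegel set $\FS\subset G$ — a product $\Omega A_t K$ in a suitable Langlands-type decomposition relative to a minimal $\QQ$-parabolic — whose image in $G/M$ is a semi-algebraic set $\FS$ (the defining conditions "coordinate in a compact semi-algebraic box" and "$a^\alpha \le t$ for simple roots $\alpha$" are semi-algebraic), such that $\Gamma\cdot\FS = G/M$ and, crucially, the set $\{\gamma\in\Gamma : \gamma\FS\cap\FS\ne\emptyset\}$ is \emph{finite}. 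This finiteness is the heart of the matter: it means the equivalence relation on $\FS$ cut out by $\pi$ is $\bigcup_{\gamma\in F}\{(x,\gamma x)\}$, a \emph{finite} union of semi-algebraic sets, hence semi-algebraic, and $\pi_{|\FS}$ is a semi-algebraic quotient map. One then takes a fundamental set $\FF\subset\FS$ (cutting down $\FS$ semi-algebraically so that $\pi_{|\FF}$ is finite-to-one and surjective) and transports the semi-algebraic structure of $\FF$, glued along the finitely many overlaps $\gamma\FF\cap\FF$, to define the $\RR_{\alg}$-definable manifold structure on $S_{\Gamma,G,M}$; the stated characterization — that this is the unique such structure making $\pi_{|\FS}$ definable — follows because any two Siegel sets are contained in finitely many $\Gamma$-translates of one another, so the two induced structures refine each other via semi-algebraic maps.

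For part (2), given a morphism $f=(\phi,g):S_{\Gamma',G',M'}\to S_{\Gamma,G,M}$ induced by an algebraic $\QQ$-morphism $\phi:\G'\to\G$ and $g\in G$, I would argue as follows. The lift $\tilde f: G'/M'\to G/M$, $h'M'\mapsto \phi(h')gM$, is a semi-algebraic map, since it is the composition of the algebraic map $\phi$ with left translation by $g$, descended to the (algebraic) quotients. Now restrict to a Siegel set $\FS'\subset G'/M'$: by reduction theory applied on the target, $\tilde f(\FS')$ is a semi-algebraic subset of $G/M$ which is covered by \emph{finitely many} $\Gamma$-translates of a Siegel set $\FS$ of $G/M$ — this is the "finiteness of the image of a Siegel set under a $\QQ$-morphism modulo $\Gamma$" statement, a standard consequence of comparing the two reduction theories (or of properness of $f$ at the level of Borel-Serre boundaries). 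Writing $\tilde f(\FS') \subset \bigcup_{i} \gamma_i\FS$ with $\gamma_i$ ranging over a finite set, the composite $\FS' \xrightarrow{\tilde f} \bigcup_i\gamma_i\FS \to S_{\Gamma,G,M}$ is semi-algebraic because each piece $\gamma_i\FS\to S_{\Gamma,G,M}$ is (it is $\pi_{|\FS}$ postcomposed with the algebraic translation $\gamma_i^{-1}$, using $\pi\circ\gamma_i^{-1}=\pi$). Since $\pi'_{|\FS'}$ is a semi-algebraic quotient and $f\circ\pi' = \pi\circ\tilde f$, definability of $f$ on $\FS'$ plus $\Gamma'$-equivariance forces $f$ itself to be $\RR_{\alg}$-definable on all of $S_{\Gamma',G',M'}$. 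The Hecke correspondence $c_g$ is then definable because its two legs $c_1,c_2$ are morphisms of arithmetic quotients of exactly this form (étale projections and left multiplication by $g\in\G(\QQ)^+$).

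The main obstacle is precisely the two finiteness statements supplied by reduction theory: (a) finiteness of $\{\gamma\in\Gamma:\gamma\FS\cap\FS\ne\emptyset\}$, and (b) finiteness, modulo $\Gamma$, of the image of a Siegel set under a $\QQ$-algebraic morphism. For (a) I would cite Borel's \emph{Introduction aux groupes arithm\'etiques} (the Siegel property); note that this already requires $M$ to be compact so that a Siegel set in $G$ projects to one in $G/M$ with the same separation property, which is why compactness of $M$ is hypothesized. For (b), in the locally symmetric case this is classical (it underlies functoriality of Borel-Serre compactifications); for a general connected compact $M$ one reduces to the maximal-compact case $K$ via the proper fibration $G/M\to G/K$, which is itself semi-algebraic, so Siegel sets and their images pull back and push forward compatibly. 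The statement about $\RR_{\an}$ and the Borel-Serre compactification is then a separate, slightly softer claim: $\overline{S_{\Gamma,G,M}}^{BS}$ is a compact real-analytic manifold with corners whose interior is $S_{\Gamma,G,M}$, and the boundary charts are built from the \emph{subanalytic} (indeed $\RR_{\an}$-definable) behaviour of Siegel sets near the rational boundary components; one checks the transition maps are restricted-analytic, which upgrades the $\RR_{\alg}$-structure canonically to an $\RR_{\an}$-one, and compatibility with the $\RR_{\alg}$-structure in the interior is automatic since both are pinned down by $\pi_{|\FS}$.
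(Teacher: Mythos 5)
Your overall strategy for part (1) — build the $\RR_\alg$-structure from semi-algebraic Siegel sets, use the Siegel property to make the equivalence relation definably proper, and descend the semi-algebraic structure via the quotient — is essentially the paper's approach (the paper formalizes the quotient step via van den Dries's theorem on definably proper equivalence relations). Two imprecisions should be fixed: $G/M$ is in general only semi-algebraic, not a real algebraic variety, as the paper's \Cref{rem1} notes via the example $SO(p,q)^+$, so an appeal to "Rosenlicht-type quotient theorems" does not give what you want (the paper uses the Chevalley--Hilbert orbit-map argument or the o-minimal quotient). And $\Gamma\cdot\FS = G/M$ can fail for a single Siegel set; reduction theory gives this only for a \emph{finite family} of Siegel sets attached to representatives of the finitely many $\Gamma$-conjugacy classes of $\QQ$-parabolics (equivalently $\Gamma C\,\FS = G$ for a finite $C\subset\G(\QQ)$), and the paper works with such a finite collection $\FS_1,\dots,\FS_k$.

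The genuine gap is in part (2). You correctly isolate the key finiteness statement (the image of a Siegel set of $\G'$ is covered by finitely many $\Gamma$-translates of a Siegel set of $\G$), but your proposed justifications are wrong. You say this finiteness "underlies functoriality of Borel--Serre compactifications" or follows from "properness of $f$ at the level of Borel--Serre boundaries." The Borel--Serre compactification is \emph{not} functorial, and the paper explicitly emphasizes this at the start of its proof of part (2): for a strict inclusion $\G'\hookrightarrow\G$ the map does not extend even continuously to the Borel--Serre boundary, because $\Gamma$-inequivalent parabolics of $\G$ can pull back to $\Gamma'$-equivalent parabolics of $\G'$. This non-functoriality is precisely what makes part (2) non-trivial. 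The needed finiteness is Orr's theorem (\Cref{orr}), a recent and genuinely substantial comparison of reduction theories, not a folklore corollary. Also, Orr's theorem is stated for inclusions of reductive groups; the paper first reduces a general morphism of arithmetic quotients to the composition of a translation, a surjection (handled via a product decomposition after passing to adjoint groups), and an inclusion, and your uniform treatment elides this reduction. So the architecture of your argument is right, but it understates the central difficulty and leans on a claim (Borel--Serre functoriality) that is false.
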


\Cref{definability}(1) can be thought as
a strengthening and a generalization of the main result of
\cite{PetStar} (for $S_{\Gamma, G, K} = \cA_g$ the moduli space of principally polarized Abelian
varieties of dimension $g$) and of \cite[Theor.1.9]{KUY} (for a
general arithmetic variety), which proved that for any
arithmetic variety $S_{\Gamma, G, K}$, endowed with the
$\RR_{\an}$-definable manifold structure deduced from its complex algebraic Baily-Borel
compactification, there exists a semi-algebraic fundamental set $\FF \subset G/K$ for the
  action of $\Gamma$ on $G/K$ such that the map $\pi_{|\FF}: \FF \lo S_{\Gamma,G, K}$ is
$\RR_{\an, \exp}$-definable. While these results claim only the
definability of $\pi_{|\FF}$ in $\RR_{\an, \exp}$ our
\Cref{definability}(1) claim it in $\RR_\alg$. This discrepancy comes
from the fact that for $S_{\Gamma, G, K}$ an arithmetic variety, the
$\RR_\an$-definable structure on $S_{\Gamma, G, K}$ extending the
natural $\RR_\alg$-definable structure of \Cref{definability}(1) on
$S_{\Gamma, G, K}$ is the one coming from the Borel-Serre
compactification $\ol{S_{\Gamma, G, K}}^{BS}$ of $S_{\Gamma, G, K}$:
it does not coincide with the one coming from the Baily-Borel
compactification $\ol{S_{\Gamma, G, K}}^{BB}$ but the natural map
$\overline{S_{\Gamma, G, K}}^{BS} \lo \overline{S_{\Gamma, G,
    K}}^{BB}$ is in fact $\RR_{\an, \exp}$-definable. As we won't need this
result in this paper we just provide the simplest illustration:

\begin{Example} \label{Ex}
Let $\FH$ be the Poincar\'e upper half-plane and $Y_0(1)$ the modular
curve $\SL(2, \ZZ) \backslash \FH$. A semi-algebraic fundamental set for the
action of $\SL(2, \ZZ)$ on $\FH$ is given by $$\cF:=\{ x+iy \in \FH\; | \; x^2 + y^2 \geq 1, -1/2 \leq x\leq 1/2\}\;\;.$$
The Borel-Serre compactification $\ol{Y_0(1)}^{BS}$ is obtained by adding a circle at
infinity to $Y_0(1)$, corresponding to the compactification $\ol{\cF}$
of $\cF$ obtained by glueing the segment $\{ y= \infty, -1/2 < x<
1/2\}$ to $\cF$. The Baily-Borel compactification $X_0(1):= \ol{Y_0(1)}^{BB}$
is the one-point compactification of $Y_0(1)$ and is naturally
identified with the complex projective line $\bP^1\CC$. The natural map
$\ol{Y_0(1)}^{BS} \lo \ol{Y_0(1)}^{BB}$ contracting the circle at
infinity to a point sends a point $(x, t= 1/y) \in [-1/2, 1/2] \times [0, 1)$
close to the circle at infinity $t=0$ to the point $[1, z= \exp(2 \pi i x) \exp (-2
\pi/t)] \in \bP^1\CC$. This map is not globally subanalytic but it is
$\RR_{\an, \exp}$-definable.
\end{Example}

It is worth noticing that the proof of the more general
\Cref{definability} is easier than the one in \cite{PetStar} (which uses explicit theta
functions) or the one in \cite{KUY} (which uses the delicate toroidal
compactifications of \cite{AMRT}): it relies
exclusively on classical properties of Siegel sets (see \Cref{Siegel sets} for the precise
definition of Siegel sets), while the proofs of \cite{PetStar} and
\cite{KUY}, which apply only to arithmetic varieties,  
moreover insisted on using only complex analytic maps, thus obscuring to some
extent the o-minimality issues.

\subsection{Moderate geometry of period maps} Arithmetic quotients
of interest to the algebraic geometers arise in Hodge theory as {\it
  connected Hodge varieties}, which are complex analytic quotients of
period domains (or more generally Mumford-Tate domains). Let $S$ be a
smooth complex quasi-projective variety and let $\VV
\lo S$ be a polarized variation of $\ZZ$-Hodge structures (PVHS) of
weight $k$ on $S$. A typical example of such a 
PVHS is $\VV = R^kf_*\ZZ$ for $f: \mathcal{X} \lo 
S$ a smooth proper morphism; in which case we say that $\VV$ is geometric.
We refer to \cite{K17} and the references therein for the
relevant background in Hodge theory, which we use thereafter. 
Let $\MT(\VV)$ be the generic Mumford-Tate group
associated to $\VV$ (this is a connected reductive $\QQ$-group) and $\G$
its associated adjoint semi-simple $\QQ$-group. The group $G:= \G(\RR)^+$ acts by
holomorphic transformations and transitively on the Mumford-Tate
domain $D=G/M$ associated to $\MT(\VV)$, with compact isotropy denoted
by $M$. If $\Ga$ is a torsion free arithmetic lattice of $G$ the arithmetic
quotient $S_{\Ga, G, M}$ is a complex analytic manifold called a
connected Hodge variety (which carries an algebraic structure in only very few
cases). Replacing if necessary $S$ by a finite \'etale cover, the PVHS 
$\VV$ on $S$ is completely described by its holomorphic period map
$\Phi_S: S \lo \Hod^0(S, \VV):= S_{\Ga, G, M}$ for a suitable
torsion-free arithmetic subgroup $\Gamma \subset G$.

We prove that the period map $\Phi_S$ has a moderate
geometry. Let us endow $S$ with the $\RR_{\an, \exp}$-definable manifold
structure extending the $\RR_{\alg}$-definable manifold structure on $S$ 
coming from its complex algebraic structure, and the connected Hodge
manifold $\Hod^0(S, \VV)= S_{\Ga, G, M}$ with the $\RR_{\an, \exp}$-definable manifold
structure extending the $\RR_{\alg}$-definable
manifold structure defined in \Cref{definability}.

\begin{theor} \label{definability period map}
Let $\VV $ be a polarized variation of pure Hodge
structures of weight $k$ over a smooth complex quasi-projective variety $S$.
Let $\Phi_S: S \lo \Hod^0(S, \VV)= S_{\Ga, G, M}$ be the 
holomorphic period map associated to $\VV$. 
Then $\Phi_S$ is $\RR_{\an, \exp}$-definable. 
\end{theor}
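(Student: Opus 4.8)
The plan is to reduce the definability of the period map $\Phi_S$ to a local statement near the boundary of a normal crossings compactification, then use the nilpotent orbit theorem to control the asymptotics. First I would choose a smooth projective compactification $\ol S$ of $S$ such that $D:= \ol S \setminus S$ is a normal crossings divisor; by a standard base change (finite ramified cover of $S$, which does not affect definability since it is algebraic) one may assume the local monodromy operators around each branch of $D$ are unipotent. Since $S$ is covered by finitely many coordinate polydiscs $\Delta^n$ in which $D$ is a union of coordinate hyperplanes, and since definability is a local-on-the-source property once the target charts are fixed, it suffices to prove that the restriction of $\Phi_S$ to each punctured polydisc $(\Delta^*)^k \times \Delta^{n-k}$ is $\RR_{\an,\exp}$-definable.

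Next I would set up the comparison with Siegel sets. Lifting to the universal cover, the period map on $(\Delta^*)^k \times \Delta^{n-k}$ is described by a holomorphic map $\tilde\Phi : \HH^k \times \Delta^{n-k} \to D = G/M$, equivariant for the translations $z_j \mapsto z_j + 1$ acting by the unipotent monodromy $N_j$. By Schmid's nilpotent orbit theorem, $\tilde\Phi(z,w) = \exp\!\big(\sum_j z_j N_j\big)\cdot \psi(e^{2\pi i z_1},\dots,e^{2\pi i z_k},w)$ where $\psi$ extends holomorphically across the origin, and the ``nilpotent orbit'' $\theta(z,w) := \exp(\sum_j z_j N_j)\cdot \psi(0,\dots,0,w)$ approximates $\tilde\Phi$ in a precise sense in the $G$-invariant distance as $\Imm z_j \to \infty$. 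The key point is that $\exp(\sum_j z_j N_j)$, viewed in the real group $G$ acting on $D=G/M$, lands in a Siegel set associated to the parabolic $\QQ$-subgroup stabilizing the monodromy weight filtration: writing $z_j = x_j + i y_j$ with $y_j \to \infty$, the real parts $x_j \in [0,1]$ contribute a bounded (hence semi-algebraic) ``$\Omega$-part'' and the $y_j$ contribute to the split-torus ``$A_t$-part'' of the Siegel set. Thus, modulo the $\Gamma$-action and after composing with the quotient map $\pi$, the image lies in $\pi(\FS)$ for a fixed semi-algebraic Siegel set $\FS$, and by \Cref{definability}(1) the map $\pi_{|\FS}$ is $\RR_\alg$-definable; it then remains to show the map $(\Delta^*)^k\times\Delta^{n-k}\to \FS$ lifting $\Phi_S$ is $\RR_{\an,\exp}$-definable. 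The coordinates $e^{2\pi i z_j} = q_j$ make $\psi$ a globally subanalytic (indeed algebraic-in-$q$, analytic) function of $(q_1,\dots,q_k,w) \in \Delta^n$, which is $\RR_{\an}$-definable; the factor $\exp(\sum_j z_j N_j)$ is polynomial in $z_j$, and $z_j = \frac{1}{2\pi i}\log q_j$ is $\RR_{\an,\exp}$-definable on a fundamental domain $\{0 \le x_j \le 1\}$ because $\log$ (equivalently $\exp$) is in $\RR_{\exp}$ and the argument function is restricted-analytic on an annulus. Combining these, $\tilde\Phi$ restricted to $\{0\le x_j\le 1,\ y_j \ge 1\}\times\Delta^{n-k}$ is $\RR_{\an,\exp}$-definable with values in $\FS$, and descending via $\pi_{|\FS}$ gives definability of $\Phi_S$ near the boundary; the interior is handled trivially since $\Phi_S$ is real-analytic on the (semi-algebraic, relatively compact) complement of a small neighborhood of $D$, on which any real-analytic map to a fixed chart of the target is $\RR_{\an}$-definable.

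The main obstacle is the passage between the ``$D$-intrinsic'' asymptotics furnished by Schmid's theorem and the ``group-theoretic'' Siegel-set description required to invoke \Cref{definability}(1). Concretely one must verify: (a) that the monodromy weight filtration picks out a genuine parabolic $\QQ$-subgroup $P$ of $\G = \MT(\VV)^{\ad}$ — this is where the $\QQ$-structure of the Mumford-Tate group and Schmid's $SL_2$-orbit theorem (in the one-variable, or more precisely several-variable-with-fixed-cone, form — crucially \emph{not} the full multivariable $SL_2$-orbit theorem of Cattani-Kaplan-Schmid) enter; (b) that the horizontal, holomorphic, distance-decreasing nature of the period map, together with the nilpotent orbit approximation, forces $\tilde\Phi$ itself (not just its nilpotent-orbit approximation) to remain within a Siegel set, uniformly; and (c) that the ``error term'' $\psi(q,w) - \psi(0,w)$ is controlled so that no escape to infinity in transverse directions occurs. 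Point (b) is the crux: the $G$-invariant metric estimate from the nilpotent orbit theorem gives that $d(\tilde\Phi(z,w),\theta(z,w))$ decays, and since $\theta(z,w)$ lies in a Siegel set and Siegel sets are ``coarsely convex'' for this metric (a neighborhood of a Siegel set is contained in a larger Siegel set), $\tilde\Phi(z,w)$ lies in an enlarged, still semi-algebraic, Siegel set. Once this containment is established the rest is bookkeeping with the o-minimal structures $\RR_{\an}$ and $\RR_{\exp}$, using only that the coordinate change $q = e^{2\pi i z}$ and its inverse are definable in $\RR_{\an,\exp}$ and that composition preserves definability.
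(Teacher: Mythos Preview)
Your reduction to the local statement on a punctured polydisc, the passage to unipotent monodromy, and the argument that the lift $\tilde\Phi$ restricted to a strip $\{0\le x_j\le 1,\ y_j\ge 1\}$ is $\RR_{\an,\exp}$-definable (via the nilpotent orbit theorem: $\exp(\sum z_jN_j)$ is polynomial in $z$, $\psi$ is restricted-analytic in $q$, and $q=e^{2\pi i z}$ is $\RR_{\an,\exp}$-definable) all match the paper exactly; this is the easy half.

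The genuine gap is your claim that the image of $\tilde\Phi$ lies in a \emph{single} Siegel set $\FS\subset G/M$ associated to ``the'' parabolic stabilizing ``the'' monodromy weight filtration. In the multivariable case there is no single monodromy weight filtration: each element of the open cone generated by the $N_j$ determines one, and while these are constant on the interior of each face, the relevant parabolic changes as the ratios $y_i/y_j$ go to $0$ or $\infty$. In fact the paper explicitly records (citing Green--Griffiths--Laza--Robles) that the one-Siegel-set statement is \emph{false} for $n>1$. Your point (b) therefore fails as stated: the nilpotent orbit $\theta(z,w)$ itself does not stay in one Siegel set, so no ``coarse convexity'' argument can rescue $\tilde\Phi$. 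What is true, and what the paper actually proves as its \Cref{finitely many Siegel}, is that the image is contained in \emph{finitely many} Siegel sets; this suffices for the definability argument but is the hard core of the proof.

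The paper's route to this finiteness is quite different from what you sketch. They embed $D$ into the symmetric space $X$ of positive-definite forms on $V_\RR$ via the Hodge form $h_z$, and reduce (via Borel--Harish-Chandra and Orr) to showing that $h_z$ is Minkowski $(e,C)$-reduced with respect to a fixed rational basis $e$ adapted to the weight filtrations, uniformly on each sector $\Sigma_n=\{y_1\ge\cdots\ge y_n\}$. Condition (3) of reducedness follows from the Cattani--Kaplan--Schmid/Kashiwara Hodge-norm asymptotics; the delicate condition (1) is obtained by showing that the matrix entries $h_z(e_i,e_j)$ are ``roughly polynomial'' functions of $(x,y)$ and then proving a lemma (their \Cref{check on curves}) that bounds on such functions can be checked on affine one-parameter curves, where Schmid's one-variable result applies. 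None of this is captured by your (a)--(c); in particular your proposed use of the $SL_2$-orbit theorem and the metric estimate from the nilpotent orbit theorem does not by itself control which Siegel sets are hit as the $y_j$ escape along different rays.
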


\begin{rems}
\begin{itemize}
\item[(1)] Notice that \Cref{definability period map} is easy in the
  rare case when the connected
  Hodge variety $S_{\Ga, G, M}$ is compact. In that case, consider $\ol{S}$ a
  smooth projective compactification of $S$ with normal crossing divisor at
infinity. It follows from Borel's monodromy theorem \cite[Lemma
(4.5)]{Schmid} and the fact that the cocompact lattice $\Ga$ does not
contain any unipotent element \cite[Cor. 11.13]{Rag72} that the
monodromy at infinity of $\VV$ is finite. Thus, replacing if
necessary $S$ by a finite \'etale cover, the PVHS $\VV$ extends to
$\ol{S}$. Equivalently the period map
$\Phi_S: S \lo \Hod^0(S, \VV):= S_{\Ga, G, M}$ extends to a
period map $\Phi_{\ol{S}}: \ol{S} \lo S_{\Ga, G, M}$. In particular
the period map 
$\Phi$ is definable in $\RR_\an$ in that case.

\item[(2)] When the connected Hodge variety $S_{\Ga, G, M}$ is an
  arithmetic variety, \Cref{definability period map} implies (see
  \Cref{algebraic}) that $\Phi_S: S \lo S_{\Ga, G, M}$ is an algebraic
  map, thus recovering a classical
  result due to Borel \cite[Theor. 3.10]{Bor72}. Hence \Cref{definability period map}
  can be thought as an extension of Borel's result to the general
  case where the connected Hodge variety $S_{\Ga, G, M}$ has no
  algebraic structure. On the other hand, notice that Borel
  \cite[Theor.A]{Bor72} proves in the arithmetic variety case the
  stronger result that $\Phi_S$ extends to a holomorphic map $\Phi_{\ol{S}}: \ol{S} \lo \ol{S_{\Ga, G,
      M}}^{BB}$, which does not directly follow from \Cref{definability period map}. 

\end{itemize}
\end{rems}

The main ingredient in the proof of \Cref{definability period map} is
the following finiteness result on the geometry of Siegel sets:

\begin{theor} \label{finitely many Siegel} 
Let $\Phi: (\Delta^*)^n \lo S_{\Ga, G, M}$ be a local period
map with unipotent monodromy on a product of punctured disks. Let $\tilde{\Phi}: \FH^n 
\lo G/M$ be its lifting to the universal cover $\FH^n$ of $(\Delta^*)^n$. 
Given constants $R>0$ and $\eta >0$ let us define
\[\FH_{R,\eta}^n:=\{\bz\in\FH^n\;\mid \; |\Reel \bz| \leq C\mbox{ and }\Ima \bz \geq \eta\}\] 
where $|\Reel \bz| := \sup_{1\leq j \leq n} |\Reel z_i|$ and $\Ima \bz
:= \inf_{1 \leq j \leq n} \Ima z_i$.

There exists finitely many Siegel sets $\FS_i \subset G/M$, $i \in I$, such that
$$\tilde{\Phi}(\FH_{R,\eta}^n) \subset \bigcup_{i \in I}\FS_i\;\;.$$

\end{theor}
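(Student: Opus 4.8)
The plan is to reduce the statement to the nilpotent orbit theorem plus properties of Siegel sets in $G/M$, proceeding one variable at a time so as to avoid invoking the full multivariable $SL_2$-orbit theorem.

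\textbf{Step 1: Reduce to a statement about the nilpotent orbit.} Let $N_1,\dots,N_n$ be the monodromy logarithms around the $n$ punctures; by hypothesis these are nilpotent elements of $\Fg$, and they commute. By Schmid's nilpotent orbit theorem, after writing $\bz = \bx + i\by$ the lift $\tilde\Phi$ is asymptotically approximated, for $\Ima \bz$ large, by the map $\bz \mapsto \exp(\sum_j z_j N_j)\cdot F_\infty$ for a fixed point $F_\infty$ in the compact dual $\check D$, with the error decaying as $\Ima z_j \to \infty$. More precisely, $\tilde\Phi(\bz) = \exp(\sum_j z_j N_j)\exp(\Gamma(\bz))\cdot F_\infty$ where $\Gamma$ is a holomorphic function on a polydisk vanishing at the origin. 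The region $\FH^n_{R,\eta}$ is exactly the preimage of a punctured polydisk with the real parts bounded, so on this region the correction term $\exp(\Gamma(\bz))$ ranges in a \emph{relatively compact} subset of $G_\CC$ (indeed it tends to a fixed compact set as $\Ima \bz \to \infty$, and on $\{\Ima \bz \geq \eta\}$ bounded away from $\infty$ we are looking at the image of a compact set). Thus it suffices to cover $\exp(\sum_j z_j N_j)\cdot F_\infty$ (intersected with $G/M$, i.e. restricted to the relevant real orbit) by finitely many Siegel sets, and then enlarge those Siegel sets by the fixed compact factor — recalling that a Siegel set translated or multiplied by a compact set is contained in a finite union of Siegel sets, by the standard theory recalled in \Cref{Siegel sets}.

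\textbf{Step 2: Control $\exp(\sum_j z_j N_j) F_\infty$ via the $SL_2$'s attached to a single filtration.} Write $\bz = \bx + i\by$ with $|\bx| \leq R$; since $\exp(\sum x_j N_j)$ lies in a compact set (the $x_j$ range over a bounded set and the $N_j$ are fixed nilpotents) we may absorb it and reduce to covering $\exp(i\sum_j y_j N_j)\cdot F_\infty$ for $y_j \geq \eta$. Now decompose the region $\{\by \geq \eta\}$ into the finitely many \emph{sectors} $V_\sigma = \{ y_{\sigma(1)} \geq y_{\sigma(2)} \geq \cdots \geq y_{\sigma(n)} \geq \eta\}$ as $\sigma$ runs over permutations; it is enough to handle one sector, say $y_1 \geq y_2 \geq \cdots \geq y_n \geq \eta$. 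Set $N = N_1 + \cdots + N_n$ (or more robustly the sums of tails), let $(W,F_\infty)$ be the limiting mixed Hodge structure, and use the single-variable $SL_2$-orbit theorem (Schmid) attached to $N$ acting on $F_\infty$: this produces a homomorphism $\rho\colon SL_2(\RR) \to G$ and a distinguished maximal $\RR$-split torus whose positive Weyl chamber, applied to a fixed base point, parametrizes exactly the kind of rays appearing in $\exp(iyN)F_\infty$. Concretely, the grading element $Y$ of the weight filtration $W$ gives a one-parameter subgroup $t \mapsto t^Y$ of $G$, and $\exp(iyN)F_\infty = y^{Y/2}\, g(y)\, F'$ for some fixed $F'$ and a function $g(y)$ converging as $y\to\infty$; the point $y^{Y/2}$ lies in a fixed ray inside a maximal split torus $A$ of $G$, which is the defining ingredient of a Siegel set. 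Iterating this along the flag $N_1+\cdots+N_n,\; N_2+\cdots+N_n,\;\dots$ (this is where the sector ordering is used, and it is the one place the reasoning must be done carefully but only needs the \emph{one-variable} orbit theorem at each stage) expresses $\tilde\Phi(\bz)$, on the sector, as (bounded) $\cdot$ ($a(\by)$) $\cdot$ (bounded), where $a(\by)$ lies in a fixed translate of the positive chamber of a fixed $A$. That is precisely the shape of a Siegel set $\FS = \Omega\, A_t\, M$, so $\tilde\Phi(V_\sigma \cap \FH^n_{R,\eta})$ is contained in one Siegel set; summing over $\sigma$ gives the finite family $\{\FS_i\}_{i\in I}$.

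\textbf{Main obstacle.} The crux, and the only genuinely delicate point, is Step 2: making precise that iterating the one-variable $SL_2$-orbit theorem along a fixed ordering $y_1\geq\cdots\geq y_n$ really does land in a \emph{single} Siegel set, with all the error factors uniformly bounded over the whole sector (including the ``walls'' where some ratios $y_j/y_{j+1}$ stay bounded while others blow up). This is exactly the estimate that, in the Cattani–Deligne–Kaplan argument, required the full Cattani–Kaplan–Schmid multivariable $SL_2$-orbit theorem; the point of the present approach is that to get mere \emph{containment in finitely many Siegel sets} — rather than a precise asymptotic expansion — one only needs the norm estimates coming from successive applications of the one-variable theorem together with the fact that a bounded perturbation of a Siegel set is covered by finitely many Siegel sets. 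I would isolate this into a lemma: "for commuting nilpotents $N_j$ and any $\eta,R$, the set $\{\exp(\sum z_j N_j)F_\infty : |\Reel\bz|\leq R,\ \Ima\bz\geq\eta\}$ is contained in a finite union of Siegel sets of $G/M$," proved by induction on $n$ using the weight filtration of $N_1$ and the one-variable orbit theorem to peel off one torus direction, then applying the inductive hypothesis to the remaining $n-1$ nilpotents acting on the resulting limit. The rest (Step 1's absorption of the holomorphic correction $\exp(\Gamma(\bz))$, and passing from $G_\CC$-orbit statements to the real homogeneous space $G/M$) is routine given the properties of Siegel sets recalled earlier in the paper.
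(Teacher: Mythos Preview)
Your Step~2 is where the argument breaks down, and the gap is exactly the one you flag as the ``main obstacle'' without resolving. Writing
\[
\exp\Big(i\sum_j y_j N_j\Big)F_\infty \;=\; (\text{bounded})\cdot a(\by)\cdot(\text{bounded})
\]
uniformly on a sector $y_1\geq\cdots\geq y_n\geq\eta$, with $a(\by)$ in a fixed split torus, \emph{is} the multivariable $SL_2^n$-orbit theorem of Cattani--Kaplan--Schmid. ``Iterating the one-variable theorem along the flag $N_1+\cdots+N_n,\,N_2+\cdots+N_n,\ldots$'' is precisely how CKS is set up, and the entire difficulty lies in showing the error terms from each stage remain bounded uniformly across all faces of the sector (where various ratios $y_j/y_{j+1}$ may or may not blow up). Your claim that ``mere containment in finitely many Siegel sets'' requires less than the full expansion is not substantiated: the one-variable theorem applied at stage $j$ gives convergence as $y_j/y_{j+1}\to\infty$ with constants depending on the remaining variables, and no mechanism in your sketch makes those constants uniform. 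The proposed induction on $n$ does not help, because the inductive hypothesis would need to be applied to a family of limit Hodge structures parametrized by the first variable, again raising uniformity issues. (A smaller issue: the perturbation $e^{\Gamma(\bz)}$ in Step~1 lies in $\G(\CC)$, not $G$, so ``enlarge the Siegel set by a compact factor'' is not literally available on $G/M$; and the nilpotent orbit need not lie in $D$ for $\Ima\bz$ merely $\geq\eta$.)

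The paper's proof takes a genuinely different route that sidesteps the iteration entirely. It embeds $D$ into the symmetric space $X$ of positive-definite forms on $V_\RR$ via the Hodge form $h_z$, so that Siegel-set containment becomes the concrete condition that $h_z$ be Minkowski-reduced with respect to a fixed integral basis. Conditions (2) and (3) of reducedness follow directly from Kashiwara's Hodge-norm asymptotics (which do \emph{not} use the $SL_2^n$-orbit theorem). For condition (1) the paper proves two lemmas: first, that the matrix entries $h_z(u,v)$ are ``roughly polynomial'' in the sense of \Cref{rough} (\Cref{roughly two}); second, a transfer principle (\Cref{check on curves}) saying that a roughly polynomial function bounded by a roughly monomial one on every affine curve in $\Sigma_n$ is bounded on all of $\Sigma_n$. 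Schmid's one-variable result then supplies the bound on each curve. Thus the one-variable theorem is used only on individual curves, never iterated, and the passage to several variables is handled by the elementary algebraic \Cref{check on curves} rather than by any orbit theorem.
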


In the one-variable case ($n=1$) \Cref{finitely many Siegel} is due to Schmid (see
\cite[Cor. 5.29]{Schmid}), with $|I|=1$. In the multivariable case, Green, Griffiths, Laza and
Robles \cite[Claims A.5.8 and A.5.9]{GGLR} show that the result with
$|I|=1$ does not hold. 

The main point of the proof of \Cref{finitely many Siegel} is to show
there is a flat frame with respect to which the Hodge form remains
Minkowski reduced, up to covering $\FH^n_{R,\eta}$ by finitely many
sets.  We note here that the proof does not use the higher dimensional
$SL_2^n$-orbit theorem of \cite{CKS}.  Rather, we deduce the
higher-dimensional statement by restricting to curves and using the
full power of Schmid's one-dimensional result, together with the work
of \cite{CKS} and \cite{Ka} on the asymptotics of Hodge norms.

\subsection{Algebraicity of Hodge loci}
Recall that the Hodge locus $\HL(S, \VV)
\subset S$ associated to the PVHS $\VV$ is the set of 
points $s$ in $S$ for which exceptional Hodge tensors for $\VV_{s}$
occur. The locus $\HL(S, \VV)$ is easily seen to be a countable union
of irreducible complex analytic subvarieties of $S$, called special
subvarieties of $S$ associated to $\VV$. If $\VV = R^kf_*\QQ$ for $f: \mathcal{X}
\lo S$ a smooth proper morphism, it follows from the Hodge
conjecture that the exceptional Hodge tensors in $\VV_{s}$ come from exceptional
algebraic cycles in some product $\mathcal{X}_s^N$. A Baire category
type argument then implies that every special subvariety of $S$ ought
to be algebraic. As an immediate
corollary of \Cref{definability}, \Cref{definability 
  period map}, and Peterzil-Starchenko's o-minimal Chow \Cref{PS} we
obtain an alternative proof of the following result originally proven by 
Cattani, Deligne and Kaplan \cite{CDK95}: 

\begin{theor}  \label{algebraicity}
The special subvarieties of $S$ associated to $\VV$ are algebraic, i.e.
the Hodge locus $\HL(S, \VV)$ is a countable union of closed irreducible algebraic
subvarieties of $S$.
\end{theor}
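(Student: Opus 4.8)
The plan is to deduce \Cref{algebraicity} from the three results already in hand: the $\RR_\alg$-definability of the Hodge variety $\Hod^0(S,\VV)=S_{\Ga,G,M}$ together with its Hecke correspondences (\Cref{definability}), the $\RR_{\an,\exp}$-definability of the period map $\Phi_S$ (\Cref{definability period map}), and Peterzil--Starchenko's o-minimal Chow theorem (\Cref{PS}). Since the Hodge locus $\HL(S,\VV)$ is a countable union of special subvarieties $Z_\alpha\subset S$, it suffices to prove that each $Z_\alpha$ is algebraic. The key point is that each special subvariety is the $\Phi_S$-preimage of a ``Hecke translate of a sub-Hodge-domain'' inside $\Hod^0(S,\VV)$, and this translate is a definable complex analytic subset; applying o-minimal Chow to $\Phi_S$ then makes $Z_\alpha$ algebraic.

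First I would recall the Hodge-theoretic description of special subvarieties. A component $Z$ of $\HL(S,\VV)$ corresponds, after passing to a finite \'etale cover of $S$, to the data of a $\QQ$-sub-Mumford-Tate group $\bM'\subset\MT(\VV)$ and a $\bM'(\RR)^+$-orbit $D'\subset D=G/M$ (a sub-Mumford-Tate domain) such that $Z=\Phi_S^{-1}(\pi(D'))$ where $\pi\colon D\to S_{\Ga,G,M}$ is the uniformization; more precisely, after adjusting by an element $g\in\G(\QQ)^+$ coming from the countably many choices of how the monodromy sits, $Z$ is a component of $\Phi_S^{-1}(c_2(c_1^{-1}(\pi(D_0))))$ for a fixed ``standard'' sub-Hodge-domain $D_0$ and a Hecke correspondence $c_g$. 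Running over the countably many pairs $(\bM',g)$ accounts for all special subvarieties, so the countable-union structure of $\HL(S,\VV)$ is automatic and I only need each individual piece to be algebraic.

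Next, I would check that the relevant subsets of the Hodge variety are $\RR_{\alg}$-definable closed complex analytic subvarieties. The image $\pi_{|\FF}(\FF\cap D_0)$ of a sub-Hodge-domain is a closed complex analytic subset of $S_{\Ga,G,M}$ which is $\RR_\alg$-definable, because $\FF$ is semi-algebraic, $D_0\subset D$ is a semi-algebraic subset (it is again a homogeneous space $G'/M'$ embedded semi-algebraically, by the $G/M$ semi-algebraicity of \Cref{G/M semi}), and $\pi_{|\FF}$ is $\RR_\alg$-definable by \Cref{definability}(1). Applying the Hecke correspondence $c_g$, which is $\RR_\alg$-definable by \Cref{definability}(2), preserves definability and closedness. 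Then the preimage under the $\RR_{\an,\exp}$-definable map $\Phi_S$ (\Cref{definability period map}) of this closed definable analytic subset is a closed $\RR_{\an,\exp}$-definable complex analytic subset of $S$. Finally, composing with the $\RR_\alg$-definable inclusion of $S$ into a projective compactification $\ol S$ and applying the Peterzil--Starchenko o-minimal Chow \Cref{PS}, a closed definable complex analytic subset of the quasi-projective variety $S$ is a closed algebraic subvariety; decomposing into irreducible components finishes the argument.

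The main obstacle I anticipate is not any deep new ingredient but rather the bookkeeping in the first step: one must carefully verify that every special subvariety really does arise as a component of $\Phi_S^{-1}$ of a Hecke translate of a \emph{fixed finite list} of standard sub-Hodge-domains $D_0$, with only the Hecke element $g\in\G(\QQ)^+$ varying over a countable set. This is where the monodromy enters --- the special subvariety through $s\in S$ is cut out by the Hodge tensors fixed by the local monodromy together with the condition of lying in a translate of a Mumford-Tate subdomain --- and one must use that, up to conjugacy, there are only finitely many $\QQ$-algebraic subgroups of $\MT(\VV)$ arising as generic Mumford-Tate groups of sub-VHS, so that the definable set one writes down is genuinely a finite union of definable pieces at each stage. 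Once that structural statement is pinned down, the rest is a formal chaining of the definability and o-minimal Chow inputs.
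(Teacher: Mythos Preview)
Your overall strategy coincides with the paper's: show that each special subvariety $Y\subset S_{\Ga,G,M}$ is definable, pull it back along the definable period map $\Phi_S$, and apply the o-minimal Chow theorem to the resulting definable complex-analytic subset of $S$. Two points, however, deserve correction.

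First, the set $\pi_{|\FF}(\FF\cap D_0)$ is in general \emph{not} equal to $\pi(D_0)$, nor is it closed: a fundamental set $\FF$ for $\Ga$ acting on $D$ need not contain a fundamental set for $\Ga\cap G'$ acting on $D_0$ (think of a Hecke-twisted diagonal $\{(z,\gamma_0 z)\}$ inside $\FH\times\FH$). The paper sidesteps this by observing that a special subvariety of $S_{\Ga,G,M}$ is by definition the image of a Hodge morphism $f:S_{\Ga',G',M'}\to S_{\Ga,G,M}$; such an $f$ is a morphism of arithmetic quotients, hence $\RR_\alg$-definable by \Cref{definability}(2), and its image is therefore definable. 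Your separate detour through Hecke correspondences is then unnecessary, since a general morphism $(\phi,g)$ of arithmetic quotients already absorbs the $\G(\QQ)^+$-translate.

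Second, your anticipated ``main obstacle'' is a phantom. There is no need for a fixed finite list of standard sub-Hodge-domains, and indeed there are in general infinitely many $\G(\QQ)$-conjugacy classes of Mumford--Tate subgroups. The argument handles one special subvariety $Y\subset S_{\Ga,G,M}$ at a time: $Y$ is definable, $\Phi_S^{-1}(Y)$ is definable and complex-analytic in $S$, hence algebraic by \Cref{PS}. The countable union in $\HL(S,\VV)$ is taken \emph{after} this step, not before, so no uniformity over the family of sub-Hodge-domains is required.
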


The proof of \Cref{algebraicity} in
\cite{CDK95} works as follows. Let $\ol{S}$ be a smooth
compactification of $S$ with a simple normal crossing divisor $D$ at
infinity. Locally in the analytic topology
$S$ identifies with $(\Delta^*)^r \times
\Delta^l$ inside $\ol{S} = \Delta^{r+l}$ (where $\Delta$ denotes the
unit disk). The ${SL_2}^n$-orbit theorems of \cite{Schmid} and 
\cite{CKS} describes extremely precisely the asymptotic of the period
map $\Phi_S$ on $(\Delta^*)^r \times
\Delta^l$. Using this description, Cattani, Deligne and Kaplan manage
to write sufficiently explicitly the equation of the locus $S(v)
\subset (\Delta^*)^r \times \Delta^l$ of the points at which some
determination of a given multivalued flat section $v$ of $\VV$ is a Hodge
class to prove that its closure $\ol{S(v)}$ in $\Delta^{r+l}$ is
analytic in this polydisk. Our proof via \Cref{definability period
  map} bypasses these delicate local computations, hence seems a worthwhile
simplification.

In view of \Cref{definability period map}, its corollary
\Cref{algebraicity}, and the recent proof \cite{BaT} (also using o-minimal techniques) of
the Ax-Schanuel conjecture for pure Hodge
varieties stated in \cite[Conj. 7.5]{K17}, we hope to convey the idea
that o-minimal geometry is an important tool in variational Hodge
theory. We refer to \cite[section 1.5]{K17} for 
possible applications of these results to the structure of $\HL(S,
\VV)$.

\Cref{algebraicity} has been extended to the case of (graded
polarizable, admissible) variation of mixed Hodge structures in \cite{BP1}, \cite{BP2},
\cite{BP3}, \cite{BPS}, \cite{KNU11} using \cite{CDK95} and the ${SL_2}^n$-orbit
theorem of \cite{KNU08} which  extends \cite{Schmid} and
\cite{CKS} to the mixed case. Our o-minimal proof of \Cref{definability period
  map} should certainly extend to this case, thus giving a simpler proof of the
algebraicity of Hodge loci in full generality. We will come back to
this problem in a sequel to this paper.

\subsection{Acknowledgments} B.K would like to thank Patrick Brosnan,
who asked him some time ago about a proof of \Cref{algebraicity} using o-minimal techniques; Mark Goresky,
Lizhen Ji and Arvind Nair, who made him notice that the map
from the Borel-Serre compactification to the Baily-Borel one is not
subanalytic, and that Borel-Serre compactifications are not
functorial; Colleen Robles, for the fruitful exchanges about
\Cref{finitely many Siegel}; Wilfried Schmid, who confirmed to him that \Cref{finitely many
  Siegel} should hold; and Kobi Peterzil and Sergei Starchenko,
whose comments led to the upgrading of \Cref{definability} from
$\RR_\an$ to $\RR_\alg$.  B.B. and J.T. would like to thank Wilfried Schmid and Yohan Brunebarbe for useful conversations.

\section{Preliminaries}

\subsection{Semi-algebraic structure on $G/M$}
The existence of a natural semi-algebraic structure on the model $G/M$
of an arithmetic quotient, stated in the following lemma, is folkloric
but we did not find a precise reference. For the convenience of the reader we provide two proofs: a
``classical''  algebraic one, and an o-minimal one
announcing the proof of \Cref{definability}(1).

\begin{lem} \label{G/M semi}
Let $\G$ be a connected semi-simple linear algebraic $\QQ$-group, $G:= \G(\RR)^+$ the real Lie group
connected component of the identity of $\G(\RR)$, and $M \subset G$ a
connected compact subgroup. Then $G/M$ admits a natural structure of a
semi-algebraic set, and the projection map $G \lo G/M$ is
semi-algebraic. The action by left-multiplication of $G$ on $G/M$ is semi-algebraic.
\end{lem}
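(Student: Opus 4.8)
The plan is to realize $G$ as a real algebraic group, produce a real algebraic model for $M$, and then invoke the general fact that the quotient of a real algebraic group by a Zariski-closed subgroup is (affine) semi-algebraic. First I would recall that $\G$, being a connected semi-simple linear algebraic $\QQ$-group, is in particular a Zariski-closed subgroup of some $\GL_N$, so $\G(\RR)$ is a real algebraic set and a real Lie group; its identity component $G = \G(\RR)^+$ is a union of connected components of $\G(\RR)$, hence is itself semi-algebraic (connected components of a real algebraic set are semi-algebraic, being definable in $\RR_{\alg}$), and the group operations are semi-algebraic since they are the restrictions of polynomial maps. Next, for the compact subgroup $M$: although $M$ is given only as an abstract compact Lie subgroup of $G$, the key point is that any compact subgroup of $G$ is contained in a maximal compact $K$, and maximal compact subgroups of real algebraic groups are real algebraic (they are the real points of reductive anisotropic-mod-center subgroups, or concretely the fixed points of a Cartan involution which is itself algebraic). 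Then $M$, being a closed — hence compact, hence Zariski-closed inside the algebraic group $K$ — subgroup, is itself a real algebraic subgroup of $G$; in particular $M$ is a semi-algebraic subset of $G$.

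With $G$ and $M$ both semi-algebraic subgroups of $\GL_N(\RR)$, I would then construct the quotient $G/M$ as a semi-algebraic set together with a semi-algebraic projection. One clean way: choose a finite-dimensional real algebraic representation $G \to \GL(W)$ and a vector $w \in W$ whose stabilizer in $G$ is exactly $M$ (such a pair exists by Chevalley's theorem, since $M$ is Zariski-closed in $G$; one can take $W$ a suitable exterior power of the defining representation and $w$ a generator of the line defining $M$, possibly after twisting by a character, but since $G$ is semi-simple it has no nontrivial characters so a line is automatically fixed pointwise only if... — more safely, use that $M = \{g : g\cdot \ell = \ell \text{ and } g|_\ell = \mathrm{id}\}$ for the appropriate line $\ell$, or directly take $w$ fixed by $M$ with stabilizer $M$). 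Then the orbit map $G \to W$, $g \mapsto g\cdot w$, is polynomial, its image $G\cdot w$ is a semi-algebraic subset of $W$ (image of a semi-algebraic set under a semi-algebraic map, by Tarski–Seidenberg), and it factors through a bijection $G/M \xrightarrow{\sim} G\cdot w$. This bijection is the definition of the semi-algebraic structure on $G/M$: a subset of $G/M$ is declared semi-algebraic iff its image in $W$ is, and then $G \to G/M$ is semi-algebraic because $G \to G\cdot w$ is the restriction of a polynomial map. Finally, the left-multiplication action $G \times G/M \to G/M$ corresponds under this identification to $(g, v) \mapsto g\cdot v$ on $G\times (G\cdot w) \subset G \times W$, which is the restriction of the bilinear (hence polynomial) action map $\GL(W)\times W \to W$, so it is semi-algebraic; naturality and independence of the auxiliary choices follows because any two such orbit realizations are intertwined by a $G$-equivariant polynomial isomorphism of the images (uniqueness of homogeneous space structure), so the resulting semi-algebraic structure is canonical.

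The main obstacle I anticipate is the step asserting that the abstractly-given compact subgroup $M \subset G$ is Zariski-closed, equivalently real algebraic, inside $G$. Compactness forces $M$ to be a closed subgroup in the analytic topology, but to see it is Zariski-closed one genuinely needs structure theory: either the theorem that compact subgroups of $\GL_N(\RR)$ are conjugate into $O(N)$ and that $O(N)$ is algebraic (so $M$ is a closed subgroup of an algebraic group, and closed connected — or in general closed with finitely many components — subgroups of algebraic groups that are themselves compact are algebraic, via e.g. the fact that the Zariski closure of a compact group is again compact and has the same Lie algebra, hence equals it when $M$ is as here connected), or an appeal to the Mostow/Cartan fixed-point picture. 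I would isolate this as the one place where "more than Tarski–Seidenberg" is used, and phrase it as a lemma: \emph{a compact subgroup of the real points of a linear algebraic $\RR$-group is the set of real points of an algebraic subgroup}. Granting that, every other step is a routine application of the stability of semi-algebraic sets under polynomial images and the Chevalley construction of homogeneous spaces as orbits — which is presumably exactly the content the authors label as the "classical algebraic proof."
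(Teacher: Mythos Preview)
Your proposal is correct and follows the same core idea as the paper's first (``algebraic'') proof: realize $M$ as a real algebraic subgroup of $\G(\RR)$ (the paper records this in the remark following the lemma, citing Onishchik--Vinberg, matching exactly the obstacle you isolate), apply Chevalley to obtain a representation $W$ and a vector $v$ whose stabilizer is $M$ (the paper passes from the Chevalley line $l$ to a fixed vector $v$ using that a compact connected group has no nontrivial character to $\RR^*$, which is the clean version of your parenthetical), and then identify $G/M$ with a connected component of the orbit. The one substantive difference is that after the orbit map $\G(\RR)\to W$ the paper further composes with generators $p_1,\ldots,p_d$ of the invariant ring $\RR[W]^{\bM_\RR}$ (Hilbert's finiteness theorem) to land in $\RR^d$, whereas you stop at the orbit $G\cdot w\subset W$ and invoke Tarski--Seidenberg directly; your shortcut is valid, since semi-algebraicity of the image requires only Tarski--Seidenberg and the orbit map already has fibers equal to $M$-cosets. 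The paper also supplies a second, purely o-minimal, proof that you do not mention: the equivalence relation $E=\{(g,gm):g\in G,\ m\in M\}\subset G\times G$ is semi-algebraic and definably proper (since $M$ is compact), so the quotient $G\to G/M$ exists in the semi-algebraic category by the general result in van den Dries's book. That route bypasses Chevalley and invariant theory entirely, at the cost of quoting a black-box existence theorem for definable quotients.
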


\begin{rem} \label{rem1}
In general $G/M$ does not admit a structure of real algebraic variety.
This is already true for $G$: for instance the group $SO(p,q)$ is a real
algebraic variety but its connected component $G:= SO(p,q)^+$
is only semi-algebraic for $p\geq q>0$. On the other hand any compact real Lie group
$M$ admits a natural structure $\bM_\RR$ of real algebraic group, see \cite[Th. 5, p.133]{OV}.
\end{rem}

\begin{proof}
  
Let us start with the algebraic proof, inspired by \cite{Schwarz}. By
a classical result of Chevalley \cite{Che}, there exists 
a finite dimensional $\G_\RR$-module $W$ and a line $l \subset W$ such 
that the stabilizer in $\G_\RR$ of $l$ is precisely $\bM_\RR$ (the real algebraic
subgroup of $\G_\RR$ such that $\bM_\RR(\RR)=M$).
As the group $M$ is compact connected, it not only stabilizes the line $l$ but fixes any
generator $v$ of $l$. 
By another classical result, this time due to Hilbert (see \cite[Ch. VIII, \textsection
14]{Weyl}), the (graded) algebra $\RR[W]^{\bM_\RR}$ of
$\bM_\RR$-invariant polynomials on $W$ is finitely generated, say by
homogeneous elements $p_1, \cdots, p_d$. Consider the real algebraic map
$p : \G(\RR) \rightarrow W \rightarrow \RR^d$
obtained by composing the orbit map of the vector $v \in W$ with $
(p_1, \cdots, p_d): W \lo \RR^d$. It identifies $\G(\RR)/M$ with the image
$p(\G(\RR))$, hence $G/M$ with a connected component of
$p(\G(\RR))$. As $p$ is real algebraic the subset $p(\G(\RR))$, hence
its connected component $G/M$, is semi-algebraic.
As $p$ is real-algebraic, the projection $G \lo G/M$ is
semi-algebraic.

Let us turn to the o-minimal proof. This is \cite[(2.18) p.167]{VDD},
which we summarize. The multiplication $G\times G\to G$ is the
restriction of an algebraic map hence semi-algebraic. The
equivalence relation  
\[E=\{(g,gm)\mid g\in G\mbox{ and }m\in M\} \subset G \times M\]
is therefore semi-algebraic and definably proper in
the sense of \cite[(2.13) p.166]{VDD}. Hence the quotient $G\to G/M$
exists semi-algebraically \cite[(2.15) p.166]{VDD}.
\end{proof}

\subsection{Siegel sets}  \label{Siegel sets}
A crucial ingredient in this paper in the classical notion of Siegel sets for
$\G$, which we recall now. We follow \cite[\textsection 2]{BJ} and refer to \cite[\S
12]{bor} for details.

Let $\bP$ be a $\QQ$-parabolic subgroup of $\G$. We denote by
$\bN_P$ its unipotent radical and by $\bL_P$ the Levi quotient $\bN_P
\backslash \bP$ of $\PP$. Let $N_P$, $P$, and $L_P$ be the Lie groups
of real points of $\bN_P$, $\bP$ and $\bL_P$ respectively. Let $\bS_P$
be the split center of $\bL_{P}$ and $A_P$ the connected component of
the identity in $\bS_{P}(\RR)$. Let $\bM_P := \cap_{\chi \in
  X^*(\bL_{P})} \ker  \chi^2$ and $M_P= \bM_P(\RR)$. Then $L_P$ admits
a decomposition $L_P = A_P M_P$. 

Let $X$ be the symmetric space of maximal compact subgroups of $G :=\G(\RR)^+$. Choosing a point $x \in X$ corresponds to choosing a maximal
compact subgroup $K_x$ of $G$, or equivalently a Cartan involution
$\theta_x$ of $G$. The choice of $x$ defines a unique real Levi subgroup $\bL_{P,x} \subset \bP_\RR$
lifting $(\bL_P)_\RR$ which is $\theta_x$-invariant, see
\cite[1.9]{BS}. Although $\bL_P$ is defined over $\QQ$ this is not
necessarily the case for $\bL_{P,x} $. The parabolic group $P$
decomposes as 
\begin{equation} \label{LD}
P= N_P A_{P, x} M_{P_{x}} \;\;,
\end{equation}
inducing a horospherical decomposition of $G$:
\begin{equation} \label{HD}
G = N_P A_{P, x} M_{P_{x}} K_x \;\;.
\end{equation}
We recall (see \cite[Lemma 2.3]{BJ} that the right action of $P$ on
itself under the horospherical decomposition is given by
\begin{equation} \label{mult}
(n_0 a_0 m_o) (n, a, m) = (n_0 \cdot (a_0 m_0) n (a_0 m_0)^{-1}, a_0
a, m_0m) \;\;.
\end{equation}

{\it In the following the reference to the basepoint $x$ in various
subscripts is omitted.}
We let $\Phi(A_P, N_P)$ be the set of characters of $A_P$ on the Lie
algebra $\Fn_P$ of $N_P$, ``the roots of $P$ with respect to
$A_P$''. The value of $\alpha \in \Phi(A_P, N_P)$ on $a \in A_P$ is
denoted $a^\alpha$. Notice that the map $a \mapsto a^\alpha$ from
$A_P$ to $\RR^*$ is semi-algebraic.

There is a unique subset $\Delta(A_P, N_P)$ of
$\Phi(A_P, N_P)$ consisting of $\dim A_P$ linearly independent roots,
such that any element of $\Phi(A_P, N_P)$ is a linear combination with
positive integral coefficients of elements of $\Delta(A_P, N_P)$ to be
called the simple roots of $P$ with respect to $A_P$. 

\begin{defi}(Siegel set) \label{defiSiegel}
For any $t >0$, we define $A_{P, t} = \{ a \in A_P \; | \; a^\alpha
>t, \; \alpha \in \Delta(A_P, N_P) \}$.
For any bounded sets $U \subset N_P$ and $W \subset M_{P}K$ the subset
$\FS:= U \times A_{P,t} \times W \subset G$ is called a
Siegel set for $G$ associated to $\bP$ and $x$.
\end{defi}

\noindent
When $U$ and $W$ are chosen to be relatively compact open semi-algebraic subsets of
$N_P$ and $M_P K$ respectively then the Siegel set $\FS= U \times A_{P, t}
\times W$ is semi-algebraic in $G$. {\em We will
only consider such semi-algebraic Siegel sets in the rest of the text.}

\medskip The following lemma follows immediately from \Cref{defiSiegel}:
\begin{lem}\label{lemma basepoint}
If $\FS$ is a Siegel set for $G$ associated to $\bP$ and $x$, then $g\FS g^{-1}$ is a
Siegel set for $G$ associated to $g \bP g^{-1}$ and $g x$.
\end{lem}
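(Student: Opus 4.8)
The plan is to unwind the definitions on both sides and check that conjugation by $g \in G$ transforms each ingredient of a Siegel set for $\bP$ and $x$ into the corresponding ingredient of a Siegel set for $g\bP g^{-1}$ and $gx$. Recall from \Cref{defiSiegel} that a Siegel set has the shape $\FS = U \times A_{P,t} \times W \subset G$ relative to the horospherical decomposition $G = N_P A_{P,x} M_{P_x} K_x$ attached to the pair $(\bP, x)$. So first I would verify that $g$-conjugation carries this horospherical decomposition for $(\bP, x)$ to the one for $(g\bP g^{-1}, gx)$: the unipotent radical $\bN_{g\bP g^{-1}}$ is $g \bN_P g^{-1}$, the split center $\bS_{g\bP g^{-1}}$ is $g\bS_P g^{-1}$, the maximal compact $K_{gx}$ associated to the basepoint $gx$ is $g K_x g^{-1}$ (since $gx$ corresponds to the maximal compact $gK_xg^{-1}$, equivalently the Cartan involution $\intt(g)\circ\theta_x\circ\intt(g)^{-1}$), and the $\theta_{gx}$-invariant Levi $\bL_{g\bP g^{-1}, gx}$ lifting $(\bL_{g\bP g^{-1}})_\RR$ is $g \bL_{P,x} g^{-1}$ by the uniqueness clause in \cite[1.9]{BS}. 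Consequently $\intt(g)$ sends the decomposition \eqref{HD} for $(\bP,x)$ bijectively to that for $(g\bP g^{-1}, gx)$, componentwise.

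Next I would check the matching of the three factors. The factors $U \subset N_P$ and $W \subset M_{P_x} K_x$ are just required to be bounded (and, for the semi-algebraic version we use throughout, relatively compact open semi-algebraic); conjugation by $g$ is an algebraic automorphism of $G$, hence a semi-algebraic homeomorphism, so $gUg^{-1} \subset N_{g\bP g^{-1}}$ and $gWg^{-1} \subset M_{g\bP g^{-1}, gx} K_{gx}$ are again bounded (resp.\ relatively compact open semi-algebraic). The only point needing a little care is the $A$-factor: I must see that $\intt(g)$ maps $A_{P,x}$ isomorphically onto $A_{g\bP g^{-1}, gx}$ and that the chamber condition $a^\alpha > t$ for $\alpha \in \Delta(A_P, N_P)$ is preserved. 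This is because conjugation identifies $\Phi(A_P, N_P)$ with $\Phi(A_{g\bP g^{-1}}, N_{g\bP g^{-1}})$ via $\alpha \mapsto \alpha \circ \intt(g)^{-1}$ — the adjoint action of $gag^{-1}$ on $\Fn_{g\bP g^{-1}} = g\Fn_P g^{-1}$ has the same eigenvalues as the action of $a$ on $\Fn_P$ — and this bijection carries simple roots to simple roots (it is a bijection of root systems preserving the property of being a positive-integral combination of a linearly independent subset). Hence $(gag^{-1})^{\alpha'} = a^{\alpha' \circ \intt(g)} > t$ exactly when $a \in A_{P,t}$, so $g A_{P,t} g^{-1} = A_{g\bP g^{-1}, t}$ with the same parameter $t$.

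Putting the three factors together, $g\FS g^{-1} = (gUg^{-1}) \times A_{g\bP g^{-1}, t} \times (gWg^{-1})$ in the horospherical decomposition of $G$ attached to $(g\bP g^{-1}, gx)$, which is precisely the defining form of a Siegel set for $G$ associated to $g\bP g^{-1}$ and $gx$ in the sense of \Cref{defiSiegel} (and it is semi-algebraic if $\FS$ was). There is essentially no obstacle here: the statement is a bookkeeping exercise, and the one substantive input is the uniqueness of the $\theta_x$-invariant real Levi subgroup from \cite[1.9]{BS}, which guarantees that the Levi pieces transform correctly under conjugation. I would therefore present this as a short paragraph rather than a formal multi-step argument, which is presumably why the authors say it ``follows immediately.''
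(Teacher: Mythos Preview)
Your proposal is correct and is exactly the unwinding of \Cref{defiSiegel} that the paper has in mind when it says the lemma ``follows immediately''; the paper gives no further argument. Your one substantive observation---that the uniqueness of the $\theta_x$-invariant Levi from \cite[1.9]{BS} forces $\bL_{g\bP g^{-1},gx}=g\bL_{P,x}g^{-1}$, whence the horospherical decompositions and the root data match under $\intt(g)$---is precisely the content behind that word ``immediately.''
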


\begin{defi}
Let $M \subset G$ be a connected compact subgroup.  A Siegel set $\FS$
for the homogeneous space $G/M$ is a semi-algebraic subset of $G/M$ of the form
$\pi_x(\FS)$, where $\FS \subset G$ denotes a Siegel set for $G$
associated to some parabolic $\QQ$-subgroup $\bP 
\subset \G$, $K_x$ is a maximal compact subgroup containing a
conjugate $gMg^{-1}$ for some $g \in \G(\QQ)$ and $\pi_x: G \lo
G/gMg^{-1} \simeq G/M$.
\end{defi}

\begin{rem} \label{remarque}
  \begin{itemize}
    \item[(1)] It follows from the definition that for $\FS \subset G/M$ a Siegel set and $g \in \G(\QQ)$
      the translate $g \FS$ is a Siegel set of $G/M$.
    \item[(2)] As the projections $\pi_x$ are semi-algebraic and we
        consider only semi-algebraic Siegel sets in $G$, Siegel sets in $G/M$
        are semi-algebraic.
      \end{itemize}
    \end{rem}



\begin{prop} \cite[Prop. 2.5]{BJ} \label{reduction}
\begin{itemize}
\item[(1)] There are only finitely many $\Gamma$-conjugacy classes of
  parabolic $\QQ$-subgroups. Let $\bP_1$, \dots, $\bP_k$ be a set of
  representatives of the $\Gamma$-conjugacy classes of parabolic
  $\QQ$-subgroups. There exists Siegel sets $\FS_i:= U_i \times A_{P_i,t_i}
  \times W_i$ associated to $\bP_i$ and $x_i$, $1 \leq i \leq k$, whose
  images in $\Gamma \backslash G/M$ cover the whole space. 
\item[(2)] For any two parabolic subgroups $\bP_i$ and Siegel
  sets $\FS_i$ associated to $\bP_i$, $i=1, 2$, the set
  $$\Gamma_{\FS_1, \FS_2}:=\{ \gamma \in \Gamma \; |\; \gamma \FS_1 \cap \FS_{2} \not =
  \emptyset\}$$ is finite.
\item [(3)] Suppose that $\bP_1$ is not $\Gamma$-conjugate to
  $\bP_2$. Fix $U_i$, $W_i$, $i =1,2$. Then  $\gamma \FS_1\cap \FS_2=
  \emptyset$ for all $t_1, t_2$ sufficiently large.
\item[(4)] For any fixed $U, W$, when $t \gg0$, $\gamma \FS \cap \FS=\emptyset$ for all
  $\gamma \in \Gamma - \Gamma_P$, where $\Gamma_P := \Gamma \cap P$.
\item[(5)] For any two different parabolic subgroups $\bP_1$ and
  $\bP_2$, when $t_1, t_2 \gg 0$ then $\FS_1 \cap \FS_2 = \emptyset$.
  \end{itemize}
\end{prop}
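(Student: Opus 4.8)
The plan is to reduce all five assertions to the case $M=\{1\}$, i.e.\ to $\Gamma$ acting on $G$ itself, where they constitute the classical reduction theory of \cite[\S 12--15]{bor}, \cite[\S 1--4]{BS}, packaged in the reference \cite{BJ} quoted above.

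\emph{Reduction to $G$.} Let $\FS^{G/M}=\pi_x(\FS)$ be a Siegel set of $G/M$, with $\FS=U_P A_{P,t} W$ a Siegel set of $G$ associated to $\bP$ and $x$ and $K_x\supseteq gMg^{-1}$, $g\in\G(\QQ)$. Since $gMg^{-1}\subseteq K_x$ and $W\subseteq M_PK_x$, the set $\FS^{\flat}:=\FS\cdot(gMg^{-1})=U_P A_{P,t}\bigl(W\cdot(gMg^{-1})\bigr)$ is again a Siegel set of $G$ associated to $\bP$ and $x$, now right $gMg^{-1}$-invariant, and the preimage of $\FS^{G/M}$ under $G\to G/M$ equals $\FS^{\flat}g$. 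Consequently an intersection $\gamma\,\FS_1^{G/M}\cap\FS_2^{G/M}\neq\emptyset$ in $G/M$ is equivalent, after cancelling the compact $M$-factors, to $\gamma\,\FS_1^{\flat}g_1\cap\FS_2^{\flat}g_2\neq\emptyset$ in $G$, hence to $(\gamma h)\cdot\FS_1'\cap\FS_2^{\flat}\neq\emptyset$, where $h:=g_1g_2^{-1}\in\G(\QQ)$ and $\FS_1':=h^{-1}\FS_1^{\flat}h$ is again a Siegel set of $G$ by \Cref{lemma basepoint}; along the way the parabolic $\QQ$-subgroups and the truncation parameters $t_1,t_2$ are unchanged. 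Therefore
\[
\{\gamma\in\Gamma\mid \gamma\,\FS_1^{G/M}\cap\FS_2^{G/M}\neq\emptyset\}=\Gamma\cap\Xi h^{-1},\qquad \Xi:=\{\delta\in G\mid \delta\FS_1'\cap\FS_2^{\flat}\neq\emptyset\}.
\]

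\emph{Conclusion of $(2)$--$(5)$.} Now invoke reduction theory on $G$ in the form stable under translating one Siegel set by a fixed $h\in\G(\QQ)$ --- equivalently, under replacing $\Gamma$ by the commensurable group $h\Gamma h^{-1}$. The Siegel finiteness property then shows that $\Gamma\cap\Xi h^{-1}$ is finite, which is $(2)$. The separation estimates show that for $t_1,t_2\gg0$ a non-empty intersection $\gamma\,\FS_1^{G/M}\cap\FS_2^{G/M}$ would force $\gamma\bP_1\gamma^{-1}=\bP_2$; as $\gamma\in\Gamma$ and $\bP_1,\bP_2$ are parabolic $\QQ$-subgroups this gives $(3)$ and $(5)$, while in the case $\bP_1=\bP_2=\bP$ the refinement ``$\gamma\FS\cap\FS\neq\emptyset$ for $t\gg0$ forces $\gamma\in\Gamma_P$'' gives $(4)$.

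\emph{Assertion $(1)$.} The finiteness of the number of $\Gamma$-conjugacy classes of parabolic $\QQ$-subgroups involves only $\Gamma$ and $\G$: it is the finiteness of the set of $\Gamma$-orbits on each of the (projective) $\QQ$-flag varieties of $\G$. For the covering statement, choose a classical finite covering of $\Gamma\backslash G$ by Siegel sets $\FS_i=U_i A_{P_i,t_i} W_i$ all based at a single point $x_0$ with $K_{x_0}\supseteq M$ (possible since $M$ is compact); their images in $\Gamma\backslash G/M$ cover it, and each image is a Siegel set of $G/M$.

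\emph{The main obstacle.} The genuine content lies in the classical reduction theory on $G$ invoked above --- the Siegel finiteness property and the separation of Siegel sets attached to non-conjugate (resp.\ distinct) parabolic $\QQ$-subgroups --- which rest on the combinatorics of $\QQ$-roots and the Harish-Chandra/Langlands growth inequalities for the coordinates of a Siegel set. The only work beyond the case $M=\{1\}$ is the elementary bookkeeping above to handle the compact group $M$ and the $\G(\QQ)$-conjugates $gMg^{-1}$ in the definition of Siegel sets of $G/M$; compactness of $M$ ensures this introduces no new phenomenon, reducing everything to the $\G(\QQ)$-equivariance of reduction theory.
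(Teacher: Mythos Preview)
The paper does not give a proof of this proposition: it is quoted from \cite[Prop.~2.5]{BJ} as a summary of classical reduction theory (Borel, Borel--Serre) and used as a black box throughout. Your sketch --- reduce to $M=\{1\}$ and invoke the standard finiteness and separation results on $G$ --- is precisely how the result is obtained in \cite{BJ}, so there is nothing to compare.

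Two minor remarks on your write-up. First, as stated here parts (2)--(5) already concern Siegel sets of $G$ in the sense of \Cref{defiSiegel} (note the notation $\FS=U\times A_{P,t}\times W$ and the phrase ``associated to $\bP_i$''), not Siegel sets of $G/M$; so your reduction step is only needed for the trivial passage in (1) from a cover of $\Gamma\backslash G$ to one of $\Gamma\backslash G/M$. Second, $\FS_1'=h^{-1}\FS_1^\flat h$ is a Siegel set for $h^{-1}\bP_1 h$, not for $\bP_1$, so ``the parabolic $\QQ$-subgroups \ldots\ are unchanged'' is literally false; after unwinding with $\delta=\gamma h$ the condition $\delta(h^{-1}\bP_1 h)\delta^{-1}=\bP_2$ does become $\gamma\bP_1\gamma^{-1}=\bP_2$, so your deductions of (3)--(5) survive, but the sentence should be rephrased.
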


\subsection{The Borel-Serre compactification $\ol{S_{\Ga, G,
      M}}^{BS}$} \label{borel serre}
In \cite{BS} Borel and Serre construct a natural compactification
$\overline{S_{\Gamma, G, K}}^{BS}$ of any arithmetic locally symmetric space
$S_{\Gamma, G, K}$ in the category of real-analytic manifolds with
corners, using the notion of geodesic actions and $S$-spaces. 
In \cite[\textsection 3]{BJ} Borel and Ji give a uniform construction
of the so-called Borel-Serre compactification $\overline{S_{\Ga, G,
    M}}^{BS}$ of any arithmetic quotient $S_{\Ga, G, M}$ 
in the category of real-analytic manifolds with corners, simplifying
the approach of \cite{BS} as they do not rely anymore on the notion of
$S$-spaces and delicate inductions: they construct a partial
compactification $\overline{G}^{BS}$ of $G$ in the category of
real-analytic manifolds with corners \cite[Prop.6.3]{BJ}, such that the left $\G(\QQ)^+$-action 
on $G$ (see \cite[prop. 3.12]{BJ}) and the commuting right $K$-action of a maximal
compact subgroup $K$ (see \cite[Prop.3.17]{BJ}) both extend to an action by weakly analytic maps to
$\overline{G}^{BS}$ (see proof of \cite[Prop. 6.4]{BJ}). For any neat
arithmetic subgroup $\Ga$ of $G$ and compact subgroup $M$ of $G$,  the
action of $\Gamma \times M$ on $\overline{G}^{BS}$ is free and proper. The quotient $\ol{S_{\Ga,
G,  M}}^{BS}:= \Gamma \backslash \ol{G}^{BS} /M$ provides a
compactification of the arithmetic quotient $S_{\Ga, G, M}$ in the
category of real-analytic manifolds with corners. 

Let us provide the details of this construction we will need.
Let $\bP \subset \G$ be a parabolic subgroup. Let $\Delta= \{ \alpha_1,
\ldots, \alpha_r\}$ be the set of simple roots in $\Phi(A_P,
N_P)$. Consider the semi-algebraic diffeomorphism $e_P: A_P \lo (\RR_{>0})^r$ defined
by  \begin{equation} \label{e_P}
e_P (a) = (a^{-\alpha_{1}}, \ldots, a^{-\alpha_{r}}) \in (\RR_{>0})^r
\subset \RR^r\;\;.
\end{equation}
Let $\ol{A_P} = [0, \infty)^r \subset \RR^r$ be the closure of
$e_P(A_P)$ in $\RR^r$. We denote by $\ol{A_{P,t}} \subset \ol{A_P}$ the closure of $e_P(A_{P,t})$. 

\medskip 
Let $$\ol{G}^{BS}= G \cup \coprod_{\bP \subset \G} (N_P \times (M_P
K))$$ be the Borel-Serre partial compactification of $G$ 
constructed in \cite[\textsection 3.2]{BJ}. The topology on
$\ol{G}^{BS}$ is such that an unbounded sequence $(y_j)_{j \in \NN}$ in
$G$ converges to a point $(n, m) \in N_P \times (M_P K)$
if and only if, in terms of the horospherical decomposition $G = N_P
\times A_P \times (M_PK)$, $y_j = (n_j, a_j, m_j)$ with $n_j \in N_P$,
$a_j \in A_P$, $m_j \in M_P K$, and the components $n_j$, $a_j$ and $m_j$
satisfy the conditions:

1) For any $\alpha \in \Phi(A_P, N_P)$, $(a_j)^\alpha \lo + \infty$,

2) $n_j \lo n$ in $N_P$ and $m_j \lo m$ in $M_P K$.

We refer to \cite[p274-275]{BJ} for the precise description of the
similar glueing between $N_P \times (M_P K)$ and $N_Q \times (M_Q K)$ for two
different parabolic subgroups $\bP \subset \bQ$.

Then:
\begin{prop} \cite[Prop.3.3]{BJ}
The embedding $N_P \times A_P \times (M_PK) = G \subset
\ol{G}^{BS}$ extends naturally to an embedding $N_P \times \ol{A_P}
\times (M_P K) \hookrightarrow \ol{G}^{BS}$.
\end{prop}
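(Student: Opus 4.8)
The plan is to construct the extension explicitly over the boundary faces of $\ol{A_P}$, identify each face with a boundary stratum of $\ol{G}^{BS}$ attached to a parabolic containing $\bP$, and then check that the resulting bijection is a homeomorphism onto its image by unwinding the sequential topology defining $\ol{G}^{BS}$.

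Fix $\bP$, write $\Delta=\{\alpha_1,\dots,\alpha_r\}=\Delta(A_P,N_P)$, and use the coordinates $e_P$ of \eqref{e_P} to identify $A_P$ with $(\RR_{>0})^r$ and $\ol{A_P}$ with $[0,\infty)^r$. For $J\subseteq\{1,\dots,r\}$ put $F_J:=\{t\in\ol{A_P}\mid t_i=0\Leftrightarrow i\in J\}$; the sets $F_J$ partition $\ol{A_P}$, with $F_\emptyset=A_P$ and $F_{\{1,\dots,r\}}=\{0\}$. The first step is to recall the standard combinatorial dictionary for parabolic subgroups (see \cite[\S 12]{bor}): the parabolic $\QQ$-subgroups $\bQ$ with $\bP\subseteq\bQ\subseteq\G$ correspond bijectively to the subsets $J$, where $\bQ_J$ is the parabolic whose Levi has $\{\alpha_i:i\notin J\}$ as simple roots; one has $\bN_{Q_J}\subset\bN_P$, $A_{Q_J}\subset A_P$, $M_P\subset M_{Q_J}$, and product decompositions $N_P=N_{Q_J}\cdot(N_P\cap M_{Q_J})$, $A_P=A_{Q_J}\cdot(A_P\cap M_{Q_J})$, through which the $\bQ_J$-horospherical decomposition of $g=(n,a,m)\in N_P\times A_P\times(M_PK)$ is obtained by absorbing the $M_{Q_J}$-parts: writing $n=n_{Q_J}\nu$ and $a=a_{Q_J}a''$ with $n_{Q_J}\in N_{Q_J}$, $\nu\in N_P\cap M_{Q_J}$, $a_{Q_J}\in A_{Q_J}$, $a''\in A_P\cap M_{Q_J}$, one gets $g=(n_{Q_J},a_{Q_J},\nu a''m)\in N_{Q_J}\times A_{Q_J}\times(M_{Q_J}K)$. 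Finally, every $\gamma\in\Phi(A_{Q_J},N_{Q_J})$ is the restriction to $A_{Q_J}$ of a sum of simple roots of $\G$ involving some $\alpha_i$ with $i\in J$.

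Next I would define the extension $\iota_P\colon N_P\times\ol{A_P}\times(M_PK)\to\ol{G}^{BS}$ face by face: on $F_\emptyset=A_P$ it is the horospherical parametrization of $G$; on a proper face $F_J$ it sends $(n,t,m)$ — the surviving coordinates $t_i>0$ ($i\notin J$) determining a unique $a''(t)\in A_P\cap M_{Q_J}$ — to $(n_{Q_J},\,\nu\,a''(t)\,m)\in N_{Q_J}\times(M_{Q_J}K)$, in the notation above. Uniqueness of the decompositions makes $\iota_P$ injective on each face, hence injective (distinct faces land in distinct strata), with image $G\cup\coprod_{\bP\subseteq\bQ}(N_Q\times(M_QK))$. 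It then remains to show $\iota_P$ is a homeomorphism onto this image, which is where the definition of the topology on $\ol{G}^{BS}$ enters. Continuity at interior points is clear; at a boundary point, given $(n_j,t_j,m_j)\to(n,t,m)$ with $t\in F_J$ and $t_j=e_P(a_j)$, one verifies the two convergence conditions relative to $\bQ_J$: condition 1) holds because a root $\gamma\in\Phi(A_{Q_J},N_{Q_J})$ is a positive combination of the $\alpha_i$ involving at least one $i\in J$, so $(a_j)^\gamma$ is a product of factors $(a_j)^{\alpha_i}\to t_i^{-1}\in(0,\infty)$ for $i\notin J$ and factors $(a_j)^{\alpha_i}\to+\infty$ for $i\in J$; condition 2) holds because the $N_{Q_J}$- and $(M_{Q_J}K)$-components $n_{Q_J,j}$ and $\nu_j a''(t_j)m_j$ depend continuously on $(n_j,t_j,m_j)$. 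For the inverse, I would show the image of $\iota_P$ is open — it consists of $G$ together with the strata reachable in the closure from $N_P\times(M_PK)$, which is open by the explicit gluing recalled in \cite[p.~274--275]{BJ} — and that a sequence in $\ol{G}^{BS}$ converging into the image has preimages converging in $N_P\times\ol{A_P}\times(M_PK)$, again by reading the $\bP$-coordinates off the $\bQ$-coordinates through the dictionary.

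The main obstacle is bookkeeping rather than conceptual: one must verify, uniformly over all $J$, that the product splittings $N_P=N_{Q_J}\cdot(N_P\cap M_{Q_J})$ and $A_P=A_{Q_J}\cdot(A_P\cap M_{Q_J})$ are exactly the identifications implicit in the gluing of the $\bP$-stratum to each $\bQ_J$-stratum built into the topology of $\ol{G}^{BS}$ in \cite[\textsection 3.2]{BJ}. Once the dictionary of the preceding paragraphs is pinned down carefully, this is a routine (if somewhat lengthy) check, and the embedding statement follows.
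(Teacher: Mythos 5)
The paper does not prove this proposition; it simply cites \cite[Prop.\ 3.3]{BJ}, so there is no in-paper argument to compare against. Your blind reconstruction follows exactly the approach one expects (and what Borel--Ji actually do): stratify $\ol{A_P}\cong[0,\infty)^r$ by its faces $F_J$, match each face $F_J$ to the parabolic $\bQ_J\supseteq\bP$ determined by letting $\{\alpha_i : i\in J\}$ degenerate, define the extension face by face through the splittings $N_P=N_{Q_J}\cdot(N_P\cap M_{Q_J})$ and $A_P=A_{Q_J}\cdot(A_P\cap M_{Q_J})$, and then verify the sequential-convergence criterion defining the topology on $\ol{G}^{BS}$. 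The dictionary is set up correctly (including the base cases $F_\emptyset\mapsto G$ and $F_{\{1,\dots,r\}}\mapsto N_P\times(M_PK)$), and the forward continuity check is carried out carefully, with the key observation that passing from $(a_j)^{\tilde\gamma}$ (a character of $A_P$) to $(a_{Q_J,j})^\gamma$ (its restriction to $A_{Q_J}$) differs only by the bounded factor $(a_j'')^{\tilde\gamma}$. Two points are left as assertions rather than proofs, and you flag both: the phrase ``Levi whose simple roots are $\{\alpha_i:i\notin J\}$'' should be read as referring to the restriction of $\Delta(A_P,N_P)$ (the Levi's semisimple part may carry additional simple roots not visible from $A_P$), and the inverse-continuity/openness step --- that a sequence in $\ol{G}^{BS}$ converging into the image has preimages converging in $N_P\times\ol{A_P}\times(M_PK)$, together with the claim that the image is open --- is sketched rather than carried through, since it requires unwinding the gluing between boundary strata described in \cite[p.\ 274--275]{BJ} and not just the interior-to-boundary convergence stated in the paper. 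Neither is a real gap: both are the routine verifications you identify as such, and the overall strategy is the correct and expected one.
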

We denote by $G(P)$ the image of $N_P \times \ol{A_P}
\times (M_P K)$ under this embedding. It is called the corner
associated with $\bP$. As explained in \cite[Prop. 6.3]{BJ}
$\ol{G}^{BS}$ has the structure of a real-analytic manifold with
corners, a system of real analytic neighbourhood of a point $(n, m) \in N_P \times
(M_PK)$ being given by the $\ol{\FS_{U,t, W}}:= U \times \ol{A_{P, t}} \times W$, for $U$ 
a neighborhood of $n$ in $N_P$, $W$ a neighborhood of $m$ in $M_PK$
and $t>0$, see \cite[Lemma 3.10, Prop. 6.1 and Prop. 6.3]{BJ}. As the
right action of any compact subgroup $M$ of $K$ on $G$ extends to
a proper real analytic action on $\ol{G}^{BS}$, the quotient
$\ol{G}^{BS}/M$ is a partial compactification of $G/M$ in the category
of real-analytic manifolds with corners.

The left $\G(\QQ)$-multiplication on $G$ extends to a real
analytic action on $\ol{G}^{BS}/M$: see \cite[Prop. 3.12]{BJ} for the
extension to a continous action and the proof of \cite[Prop. 6.4]{BJ}
for the proof that the extended action is real analytic. The
restriction of this extended action
to a neat $\Ga$ is free and properly discontinuous (see
\cite[Prop. 3.13 and Prop. 6.4]{BJ}). Then $\ol{S_{\Ga, G, M}}^{BS}:=
\Gamma \backslash \ol{G}^{BS}/M$ is a compact real analytic 
manifold with corners compactifying $S_{\Ga, G, M}$. We denote by $\ol{\pi} : \ol{G}^{BS}/M \lo \ol{S_{\Ga, G, M}}^{BS} $
the extension of $\pi$.

\section{Proof of \Cref{definability}}

\subsection{Proof of \Cref{definability}(1)} \label{atlas} 

\begin{proof}[\unskip\nopunct]

By \Cref{reduction}(1) there exist finitely many $\bP_1, \ldots, \bP_k$
parabolic subgroups of $\G$ and Siegel sets $\FS_i:= U_i \times A_{P_{i}, t_{i}}
  \times W_i$, $1 \leq i \leq k$, with $U_i$, $W_i$ compact semi-algebraic subsets of
$N_{P_{i}}$ and $M_{P_{i}} K/M$ respectively, whose images $V_i:=
\pi(\FS_{i})$, $1 \leq i \leq k$ cover $S_{\Ga, G, M}$. 
The real analytic manifold
$S_{\Ga, G, M}$ can thus be obtained as the quotient of 
$\coprod_{i=1}^k \FS_i$ by the \'etale equivalence relation $E$ defined by 
$$ x_1 \in \FS_{i_{1}} \sim_{E} x_2 \in \FS_{i_{2}} \iff \exists \,
\gamma \in \Gamma
\;| \; \gamma x_1 = x_2 \;\;.$$ The $V_i$'s, $1 \leq i \leq k$,
provide a cover of $S_{\Ga, G, M}$ by open real-analytic charts.

For $1 \leq i \leq k$ let $\cl(\FS_i)$ denotes the topological closure
of $\FS_i$ in $G/M$. The quotient $S_{\Ga, G, M}$ also identifies with
the quotient of $\coprod_{i=1}^k \cl(\FS_i)$ by the same equivalence
relation $\sim_{E}$. 
As each $\FS_i$, $1 \leq i \leq k$, is semi-algebraic, their closure
$\cl(F_i)$ too, hence the set $\coprod_{i=1}^k \cl(\FS_i)$ is $\RR_\alg$-definable. As the action of
$\Ga$ is real-algebraic on $G$, the equivalence relation $\sim_E$ on $\coprod_{i=1}^k \cl(\FS_i)$ is
$\RR_\alg$-definably proper by \Cref{reduction}(2) in the sense of
\cite[(2.13) p.166]{VDD}. By \cite[(2.15) p.166]{VDD} the 
quotient $S_{\Ga, G, M}= (\coprod_{i=1}^k \cl(\FS_i))/\sim_{E}$ is
naturally an $\RR_\alg$-definable manifold: each $\cl(V_i):= \pi(\cl(\F_i))$
is $\RR_{\alg}$-definable and the restriction $\pi_i: \cl(\FS_i) \lo \cl(V_i)$
of $\pi$ to $\FS_i$ is $\RR_{\alg}$-definable. As each $\FS_i$ is
semi-algebraic, it follows that 
$V_i:= \pi_i(\FS_i)$, $1 \leq i \leq k$, is semi-algebraic and the
$V_i$'s, $1\leq i \leq k$, form an explicit finite open atlas of the
$\RR_\alg$-definable manifold $S_{\Ga, G, M}$.

Let $\FS \subset G/M$ be any Siegel set. By \Cref{reduction}(1) there
exists $\gamma \in \Ga$ such that $\gamma \FS$ is associated to
one of the parabolics $\bP_i$ for some $1 \leq i \leq k$. Replacing $\FS_i$ by a bigger Siegel set
for $\bP_i$ if necessary in the previous construction, we can assume without loss of generality that
$\gamma\FS$ is contained in $\FS_i$. Hence $\pi_{|\FS}: \FS
\lo S_{\Ga, G, M}$ coincide with the composite 
$$\FS \stackrel{\gamma \cdot}{\rightarrow} \gamma\FS \hookrightarrow \FS_i
\stackrel{\pi_i}{\rightarrow} S_{\Ga, G, M}$$ 
hence is $\RR_\alg$-definable.

With the notations above, the set $\cF:=
\cup_{i=1}^k \FS_i \subset G/M$ is a semi-algebraic fundamental set for the action
of $\Ga$ on $G/M$. As each $\pi_i: \FS_i \lo S_{\Ga, G, M} $ is
$\RR_\alg$-definable, it follows that $\pi_{\cF}: \cF \lo S_{\Ga, G, M}$ is
$\RR_{\alg}$-definable.

By \Cref{ManifoldWC} the compact real analytic manifold with corners
$\ol{S_{\Ga, G, M}}^{BS}$ admits a natural structure of
$\RR_\an$-definable manifold with corner. Explicitly: the images $\ol{V_i} :=
\ol{\pi}(\ol{\FS_{i}})$ of $\ol{\FS_{i}}:= U_i \times \ol{A_{P_{i}, t_{i}}}
\times W_i \subset \ol{\G}^{BS}$, $1 \leq i \leq k$, cover
$\ol{S_{\Ga, G, M}}^{BS}$ and form a finite atlas of the
$\RR_\an$-definable manifold with corners $\ol{S_{\Ga, G,
    M}}^{BS}$. Let us show that the $\RR_\an$-definable
manifold structure on $S_{\Ga, G, M}$ obtained by restriction of this structure of
$\RR_\an$-definable manifold with corner on $\ol{S_{\Ga, G, M}}^{BS}$
coincide with the $\RR_\an$-definable
manifold structure extending the $\RR_\alg$-definable
manifold structure on $S_{\Ga, G, M}$ we just constructed.
We are reduced to showing that for each $i$, $1 \leq i \leq k$, the
map $\FS_i \stackrel{\pi_i}{\lo} V_i \hookrightarrow 
\ol{V_i}$ is $\RR_{\an}$-definable.  
It factorises as 
$$
\xymatrix@1{
\FS_i  \ar[rr]^{1_{U_{i}} \times e_{P_{i}}
    \times 1_{W_{i}}} & &\ol{\FS_{i}}\ar[rr]^{\ol{\pi_i}} & &\ol{V_i}\;\;.
}
$$
On the one hand, it follows from the definition~(\ref{e_P}) of
$e_{P_{i}}: A_{P_{i}, t_{i}}  \lo \ol{A_{P_{i}, t_{i}}} \subset \RR^{r_{i}}$ that $e_{P_{i}}$, hence also $1_{U_{i}} \times e_{P_{i}}
    \times 1_{W_{i}}$, is semi-algebraic. On the other hand, the map $\ol{\pi_i}: U_i \times
\ol{A_{P_{i}, t_{i}}} \times W_i \lo \ol{V_i}$ is a real analytic map
between compact sets, hence is $\RR_\an$-definable. This concludes the
proof that $\pi_{|\FS}: \FS \lo S_{\Ga, G, M}$ is
$\RR_\an$-definable.

This concludes the proof of \Cref{definability}(1).
\end{proof}

\begin{rem}
Although the $\ol{\FS_i}$, $1 \leq i \leq k$, are semi-algebraic, it
is not true in general that $\ol{S_{\Ga, G, M}}^{BS}$ admits a natural
structure of $\RR_\alg$-definable manifolds with corners: if $\bP
\subset \bQ$ are two parabolics of $\G$ it follows from the proof of
\cite[Prop. 6.2]{BJ} that the inclusion of the corner
$G(Q) \subset G(P)$ is real-analytic but not semi-algebraic in general.
\end{rem}

\subsection{Morphisms of arithmetic quotients are definable: proof of
  \Cref{definability}(2)}
  
\begin{proof}[\unskip\nopunct]  

Notice that the statement of \Cref{definability}(2) is
non-trivial even in $\RR_\an$. For instance in the case
where $f: \G' \lo \G$ is a strict inclusion the morphism $f$ does not usually extend
to a real analytic morphism (or even a continuous one)
$\ol{f}$ between the Borel-Serre compactifications (in other words the
Borel-Serre compactification is not functorial). The problem is
that two parabolic subgroups $\bP_i \subset \G$, $i=1,2$, can be non conjugate
under $\Ga$ while their intersections $\bP_i \cap \G'$ are
$\Ga'$-conjugate parabolic subgroups of $\G'$. However
\Cref{definability}(2) will follow from a finiteness result for Siegel
sets due to Orr (see \Cref{orr}).

First, we claim that for $g \in \G(\QQ)$ the morphism of arithmetic
quotients $(\inter(g), g): S_{g^{-1}\Gamma g , G, M} \lo S_{\Gamma ,G,
   M}$ (the left multiplication by $g$) is $\RR_\alg$-definable: 
  this follows immediately from \Cref{remarque}(1).

Let $(f,g): S_{\Gamma', G', M'} \lo S_{\Gamma,G, M}$ be a general morphism of
arithmetic quotients.  As $(f, g)= (\inter(g^{-1}) \circ f, 1) \circ (
\inter(g), g)$ we are reduced to considering morphism of
arithmetic quotients $f:=(f,1): S_{\Gamma', G', M'}\lo S_{\Gamma, G,
  M}$ deduced from a morphism $f: \G' \lo \G$
of semi-simple linear algebraic $\QQ$-group such that $f(M') \subset
M$ and $f(\Ga') \subset \Ga$.

Let $(V'_i)_{1\leq i \leq k}$ be an
$\RR_{\alg}$-atlas for $S_{\Ga', G', M'}$ as in
\Cref{atlas}. Showing that $f: S_{\Ga', G', M'} \lo S_{\Ga, G, M}$ is
$\RR_{\alg}$-definable is equivalent to showing that for each
$i$, $1 \leq i \leq k$, the restriction $f: V'_i \lo S_{\Ga, G, M}$ is $\RR_{\alg}$-definable.
As the diagram
$$
\xymatrix{
\FS_i':= U'_{i} \times A'_{P'_{i}, t'_{i}} \times W'_{i} \ar[r]^>>>>>>f
\ar[d]_{\pi'_{i}} & G \ar[d]^{\pi} \\
V'_{i} \ar[r]_f  & S_{\Ga, G, M}
}
$$
is commutative, it is enough to show that the composite
\begin{equation}  \label{composite}
\FS'_i
\stackrel{f}{\longrightarrow} G \stackrel{\pi}{\longrightarrow}
S_{\Ga, G, M}
\end{equation}
is $\RR_{\alg}$-definable.

The case where $\G' = \G$ is clear from \Cref{definability}(1) (notice that in that case Goresky
and MacPherson show in \cite[Lemma 6.3]{GMcP} (and its proof)
that $f$ extends uniquely to a real analytic morphism $\ol{f} :  \ol{S_{\Gamma', G', M'} }^{BS} \lo \ol{S_{\Gamma,G,
    M}}^{BS}$).

Suppose that $f: \G' \lo \G$ is surjective. Without loss of generality
we can assume that $\G$, and then $\G'$, are adjoint. Then $\G' = \G
\times \bH$, $S_{\Ga', G', M'} = S_{\Ga' \cap G, G, M' \cap G} \times S_{\Ga' \cap H, H, M' \cap H}$, the map $f$ coincides with the
projection onto the first factor (this projection is obviously
$\RR_\alg$-definable) composed with the morphism of
arithmetic quotients $i: S_{\Ga' \cap G,
  G, M' \cap G}  \lo S_{\Ga, G, M}$, which is $\RR_\alg$-definable from the
case $\G' = \G$. This proves \Cref{definability}(2) when $f: \G' \lo \G$ is surjective.

We are thus reduced to proving \Cref{definability}(2) in the case
where $f: \G' \lo \G$ is a strict inclusion. We use the following:
\begin{theor} (\cite[Theor.1.2]{Orr}) \label{orr}
Let $\G$ and $\bH$ be reductive linear $\QQ$-algebraic groups, with
$\bH \subset \G$. Let $\FS_H:= U_H \times A_{P_{H}, t} \times W_H \subset \HH(\RR)$ be a Siegel set
for $\bH$. 

Then there exists a finite set $C \subset \G(\QQ)$ and a Siegel set
$\FS:= U \times A_{P, t} \times W \subset \G(\RR)$ such that $\FS_H \subset C\cdot \FS$.
\end{theor}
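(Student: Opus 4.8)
We only sketch the argument; full details are in \cite{Orr}. The plan is to strip $\FS_H$ down to its split-torus and unipotent pieces, match these to a standard parabolic of $\G$ through a combinatorial analysis of how $\bH$-roots restrict from $\G$-roots, and run an induction on $\dim\G$, dumping all the resulting non-uniformity into the finite set $C$. Throughout one uses that the conclusion $\FS_H\subseteq C\cdot\FS$ is unaffected by enlarging the bounded factors $U,W$ of $\FS$, by enlarging $C$, or by replacing $\FS_H$ by a larger Siegel set for $\bH$. First I would reduce to the case where $\bP_H$ is a \emph{minimal} parabolic $\QQ$-subgroup of $\bH$: a simple root of an arbitrary $\QQ$-parabolic $\bP_H$ is, up to a factor bounded away from $0$ and $\infty$ on the bounded set $W_H$, a non-negative integral combination of simple roots of a minimal $\bP_H^{\min}\subseteq\bP_H$, so an element of $A_{P_H,t}$ lies (after absorbing the ``Levi'' directions into an enlarged $W_H$) in $A_{P_H^{\min},t'}$ for suitable $t'>0$, while $\bN_{P_H}\subseteq\bN_{P_H^{\min}}$. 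From now on $A_{P_H}=\bS_H(\RR)^+$ for a maximal $\QQ$-split torus $\bS_H$ of $\bH$, central in the Levi of $\bP_H$.

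Next I would extend $\bS_H$ to a maximal $\QQ$-split torus $\bS$ of $\G$, yielding a linear inclusion $\Fa_{\bS_H}=X_*(\bS_H)\otimes\RR\hookrightarrow\Fa_{\bS}=X_*(\bS)\otimes\RR$. The closed dominant cone $\overline{C_H^+}\subseteq\Fa_{\bS_H}$ cut out by $\bP_H$ is polyhedral, hence meets only finitely many closed Weyl chambers of the relative root system of $(\G,\bS)$; let $\bQ_1,\dots,\bQ_m$ be the corresponding minimal parabolic $\QQ$-subgroups of $\G$ containing $\bS$. For $a\in A_{P_H,t}$ with $\log a$ in the $l$-th chamber every simple $\bQ_l$-root takes value $\ge 1$ on $a$, so (choosing the Siegel threshold $<1$, as we may) $a$ lies in the torus part of a Siegel set for $\bQ_l$. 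Moreover every $\bS$-root occurring in $\Fn_{P_H}$ restricts to a $\bP_H$-positive form on $\Fa_{\bS_H}$; one then sorts these root spaces according to whether the root is a positive $\bQ_l$-root (so the root space sits in $\Fn_{Q_l}$ and is contracted by conjugation by a deep $a$, hence stays inside an enlarged $U$) or is bounded along $\overline{C_H^+}$, passing to the standard parabolic $\bQ_l'\supseteq\bQ_l$ of $\G$ whose simple roots are exactly those \emph{not} forced bounded on $\overline{C_H^+}$.

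This leaves, inside each chamber, a residual contribution coming from the $\Fn_{P_H}$-root spaces that have become trivial on the split centre of $\bQ_l'$, together with the part of $a$ moving \emph{inside} the Levi $\bL_{Q_l'}$ rather than its split centre. I would dispose of this by applying the theorem inductively to the inclusion $\bH\cap\bL_{Q_l'}\subseteq\bL_{Q_l'}$, which has strictly smaller dimension (the base case $\bH=\G$ being trivial), so that the residual torus-and-unipotent data lands in finitely many $\bL_{Q_l'}(\QQ)$-translates of a Siegel set of $\bL_{Q_l'}$; pulling this back up $\G$ through the horospherical decomposition attached to $\bQ_l'$ expresses the $l$-th chamber contribution as contained in finitely many $\G(\QQ)$-translates of a Siegel set for $\bQ_l'$. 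Finally, to merge the $m$ chamber contributions into a single Siegel set attached to a single parabolic $\bP$ of $\G$, I would use that any two minimal parabolic $\QQ$-subgroups containing $\bS$ are conjugate by an element of $N_{\G}(\bS)(\QQ)$ realizing the relative Weyl group, and fold these finitely many rational conjugators, together with those produced by the induction, into $C$.

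The main obstacle is exactly this last, intertwined bundle of issues. Unlike for the torus part, the unipotent radical of $\bP_H$ is in general \emph{not} contained in the unipotent radical of any parabolic of $\G$: the obstruction is precisely the $\bS_H$-root spaces lying in walls of $\G$, which is the same phenomenon that makes Borel--Serre compactifications non-functorial (cf. the discussion preceding \Cref{orr}). Yet the statement still demands one Siegel set for one parabolic, not a finite union. Making $C$ absorb all of this \emph{uniformly} along the induction — in particular checking that the ``wall'' root spaces and the Levi-directions of $a$ are genuinely swallowed by the lower-dimensional problem on $\bL_{Q_l'}$ and do not leak out as unbounded factors, and that conjugating the finitely many pieces to a common parabolic does not reintroduce unboundedness — is where the real work of \cite{Orr} lies.
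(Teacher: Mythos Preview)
The paper does not contain a proof of this statement: it is quoted as \cite[Theor.~1.2]{Orr} and used as a black box, once in the proof of \Cref{definability}(2) and once (via the remark that any Siegel set of $D$ is contained in finitely many Siegel sets of $X$) in the proof of \Cref{finitely many Siegel}. There is therefore nothing in the present paper to compare your sketch against; the correctness of your outline has to be checked against \cite{Orr} itself, not against this paper.
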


Applying this result to $\G' \subset \G$ and the Siegel set $\FS'_i$
of $\G'$, there exists a finite set $C_i \subset \G(\QQ)$
and a Siegel set $\FS_i:= U_i \times A_{P_{i}, t_{i}} \times W_i$ such that the
composition~(\ref{composite}) factorizes as
\begin{equation} \label{factor}
\FS'_i \lo C_i \cdot \FS_i \stackrel{\pi}{\longrightarrow}
S_{\Ga, G, M}\;\;.
\end{equation}

The inclusion $\FS'_i \lo C_i \cdot \FS_i$ is semi-algebraic. The map
$C_i \cdot \FS_i \stackrel{\pi}{\longrightarrow}
S_{\Ga, G, M}$ is $\RR_{\alg}$-definable by
\Cref{definability}(1). 

This finishes the proof of \Cref{definability}(2).
\end{proof}

\section{Definability of the period map}

\subsection{Reduction of \Cref{definability period map}
  to a local statement}

In the situation of \Cref{definability period map}, 
let $S \subset \ol{S}$ be a smooth compactification such that $\ol{S}
-S$ is a normal
crossing divisor. Let $(\overline{S_i})_{1\leq i \leq m}$ be a finite
open cover of $\ol{S}$ such that the pair $(\ol{S_{i}}, S_i := S\cap
\overline{S_i})$ is biholomorphic to $(\Delta^n, (\Delta^*)^{r_{i}}
\times \Delta^{l_i:= n-r_{i}})$. To show that the period map $\Phi_S: S \lo \Hod^0(S,
\VV) = S_{\Ga, G, M}$ is $\RR_{\an, \exp}$-definable, it is enough to
show that for each $i$, $1 \leq i \leq m$, the restricted period map
\begin{equation} 
{\Phi_S}_{|S_{i}}: S_i= (\Delta^*)^{r_{i}} \times \Delta^{l_{i}} \lo S_{\Ga, G,
  M}
\end{equation}
is $\RR_{\an, \exp}$-definable. Without loss of generality we can
assume that $r_i= n$ and $l_i=0$ by allowing some factors with trivial
monodromies. Finally we are reduced to proving:

\begin{theor} \label{polydisc}
Let $\VV $ be a polarized variation of pure
Hodge structures of weight $k$
over the punctured polydisk $(\Delta^*)^n$, with period map $\Phi:
(\Delta^*)^n \lo S_{\Ga, G, M}$.
Then $\Phi$ is $\RR_{\an, \exp}$-definable.
\end{theor}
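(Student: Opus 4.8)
The plan is to reduce \Cref{polydisc} to the finiteness statement \Cref{finitely many Siegel} by a standard coordinatewise splitting of the period map into a ``bounded'' part and a ``growing'' part, and then to exploit the $\RR_\alg$-definability of $\pi_{|\FS}$ on Siegel sets (\Cref{definability}(1)) together with the $\RR_{\an,\exp}$-definability of the exponential coordinates on the punctured disk.

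First I would replace $(\Delta^*)^n$ by a finite \'etale cover so that the local monodromies $T_1,\dots,T_n$ of $\VV$ are unipotent: this is legitimate by Borel's monodromy theorem \cite[Lemma (4.5)]{Schmid}, since making monodromy unipotent only requires passing to a finite cover of each $\Delta^*$ factor, which is again (a product of) punctured disks, and definability of the composite $(\Delta^*)^n\to(\Delta^*)^n\to S_{\Ga,G,M}$ with a finite map to the source implies definability of $\Phi$ (the covering map $z\mapsto z^d$ is semi-algebraic). So assume $\Phi$ has unipotent monodromy, with logarithms $N_j=\log T_j$, and let $\tilde\Phi\colon\FH^n\to G/M$ be the lift to the universal cover, so that $\tilde\Phi(\bz+e_j)=\exp(N_j)\tilde\Phi(\bz)$. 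Set $\psi(\bz):=\exp\!\big(-\sum_j z_jN_j\big)\tilde\Phi(\bz)$; this is invariant under $\bz\mapsto\bz+e_j$, hence descends to a holomorphic map $\bar\psi\colon(\Delta^*)^n\to\check D\subset\Gr$ into the compact dual, where as usual $q_j=\exp(2\pi i z_j)$. By the nilpotent orbit theorem \cite[Theor. (4.9)]{Schmid}, $\bar\psi$ extends holomorphically across $\Delta^n$; in particular $\bar\psi$ is a bounded holomorphic map on $\Delta^n$, and being the restriction of a holomorphic map on a neighborhood of a compact set, it is $\RR_\an$-definable. Thus on a ``central'' region of the polydisk — say $|\Ima\bz|$ large and $|\Reel\bz|$ bounded, i.e. $\FH^n_{R,\eta}$ in the notation of \Cref{finitely many Siegel} — we have $\tilde\Phi=\exp\!\big(\sum_j z_jN_j\big)\cdot\bar\psi(q(\bz))$.

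Now I would tile the punctured polydisk. Cover $(\Delta^*)^n$ by: (i) for each subset $J\subseteq\{1,\dots,n\}$, a region $U_J$ where the coordinates $q_j$ with $j\in J$ are small and the $q_j$ with $j\notin J$ are bounded away from $0$ (so ``only the $J$-directions degenerate''); on $U_J$ the period map is, up to a bounded holomorphic factor, a period map on $(\Delta^*)^J$ with parameters in $(\Delta)^{J^c}$, reducing to the $|J|$-variable case and ultimately, by induction on $n$ and using a fundamental set away from the boundary (\Cref{definability}(1) again), to the region where \emph{all} $q_j\to 0$; and (ii) the region where all $q_j$ are bounded away from $0$ and $\infty$, which is relatively compact in $(\Delta^*)^n$, where $\Phi$ factors through a relatively compact set in $S_{\Ga,G,M}$ and is $\RR_\an$-definable because $\tilde\Phi$ is real-analytic and we may cover by finitely many Siegel sets (or simply a compact fundamental domain) and apply \Cref{definability}(1). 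So it suffices to treat the region where all $q_j$ are small, i.e. $\Phi$ restricted to the image of $\FH^n_{R,\eta}$ for suitable $R,\eta$.

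On $\FH^n_{R,\eta}$ I invoke \Cref{finitely many Siegel}: there are finitely many Siegel sets $\FS_1,\dots,\FS_m\subset G/M$ with $\tilde\Phi(\FH^n_{R,\eta})\subset\bigcup_i\FS_i$. Partition $\FH^n_{R,\eta}$ into the finitely many definable pieces $E_i:=\tilde\Phi^{-1}(\FS_i)\cap\FH^n_{R,\eta}$ (these are definable once we know $\tilde\Phi$ is definable on $\FH^n_{R,\eta}$ — which is exactly the point we are after, so one must be slightly careful: instead, cover $\FH^n_{R,\eta}$ by finitely many \emph{closed} definable boxes on each of which $\tilde\Phi$ lands in a \emph{single} $\FS_i$, which is possible by the explicit form of the Siegel sets and the nilpotent-orbit description $\tilde\Phi(\bz)=\exp(\sum z_jN_j)\bar\psi(q(\bz))$). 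On each such box, $\Phi=\pi\circ\tilde\Phi=\pi_{|\FS_i}\circ\tilde\Phi$. Here $\pi_{|\FS_i}$ is $\RR_\alg$-definable by \Cref{definability}(1). And $\tilde\Phi$ on this box is $\RR_{\an,\exp}$-definable: in the horospherical coordinates of $\FS_i$ its components are built from (a) the polynomial-in-$\Reel\bz$, exponential-in-$\Ima\bz$ entries of $\exp(\sum z_jN_j)$ — definable in $\RR_{\exp}$ after writing $z_j=x_j+iy_j$ — and (b) the entries of $\bar\psi(q(\bz))$, which is $\RR_\an$ in the $q_j$, composed with $q_j=e^{2\pi i z_j}=e^{-2\pi y_j}(\cos 2\pi x_j+i\sin 2\pi x_j)$, again definable in $\RR_{\an,\exp}$ since $y_j\geq\eta>0$ keeps $q_j$ in a compact subset of $\Delta$ where the restricted analytic functions apply and $x_j$ ranges over a bounded interval. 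Composing, $\Phi$ is $\RR_{\an,\exp}$-definable on each box, hence on $\FH^n_{R,\eta}$, hence (descending and combining with region (ii) and the inductive reduction) on all of $(\Delta^*)^n$.

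The main obstacle is making the last step rigorous, i.e. producing a \emph{finite definable} partition of $\FH^n_{R,\eta}$ on each piece of which the moving frame provided by \Cref{finitely many Siegel} is realized by a \emph{fixed} Siegel set and a fixed flat frame — this is precisely where the content of \Cref{finitely many Siegel} (``there is a flat frame with respect to which the Hodge form stays Minkowski reduced, up to covering by finitely many sets'') is used, and where one must check that the resulting change-of-frame matrices and the reduced Hodge form have entries that are $\RR_{\an,\exp}$-definable functions of $(x_1,\dots,x_n,e^{-2\pi y_1},\dots,e^{-2\pi y_n})$; the asymptotic estimates on Hodge norms from \cite{CKS} and \cite{Ka}, already invoked in proving \Cref{finitely many Siegel}, are what guarantee these entries have controlled (polynomial-times-power) growth and hence fall within the o-minimal structure.
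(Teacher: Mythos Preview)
Your approach is the paper's approach --- nilpotent orbit theorem plus \Cref{finitely many Siegel} plus \Cref{definability}(1) --- but you have tangled yourself in an artificial circularity and added several unnecessary layers.

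The confusion is in your parenthetical ``these are definable once we know $\tilde\Phi$ is definable on $\FH^n_{R,\eta}$ --- which is exactly the point we are after.'' It is not. The goal is the definability of $\Phi=\pi\circ\tilde\Phi$; the definability of $\tilde\Phi$ itself on $\FS_\FH^n$ (or $\FH^n_{R,\eta}$) is immediate and comes \emph{first}. From the nilpotent orbit formula $\tilde\Phi(\bz)=\exp\!\big(\sum_j z_jN_j\big)\cdot\Psi(p(\bz))$ you already have everything: $\exp\!\big(\sum_j z_jN_j\big)$ is \emph{polynomial} in the $z_j$ (the $N_j$ are nilpotent --- your ``exponential-in-$\Ima\bz$'' is a slip), $\Psi$ is restricted analytic on $\Delta^n$, $p_{|\FS_\FH^n}$ is $\RR_{\an,\exp}$-definable, and the $\G(\CC)$-action on $\check D$ is algebraic. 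This is exactly the paper's \Cref{l1}. Once $\tilde\Phi$ is definable, the sets $E_i=\tilde\Phi^{-1}(\FS_i)$ are automatically definable, $\pi_{|\FS_i}$ is $\RR_\alg$-definable by \Cref{definability}(1), and you are done. Your last paragraph --- finite definable partitions, change-of-frame matrices, Hodge-norm asymptotics from \cite{CKS} and \cite{Ka} --- is entirely superfluous here; those ingredients enter only in the proof of \Cref{finitely many Siegel} itself, which you are taking as a black box.

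The tiling by subsets $J\subset\{1,\dots,n\}$ and the induction on $n$ are likewise unnecessary: $\FS_\FH^n$ already surjects onto a smaller punctured polydisk (shrinking the radius is harmless in the application to \Cref{definability period map}, where these charts form a finite cover of $S$), and the argument on all of $\FS_\FH^n$ goes through in one stroke.
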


\subsection{Proof of \Cref{polydisc} assuming
  \Cref{finitely many Siegel}}

\begin{proof}[\unskip\nopunct]
Let us fix $x_0$ a basepoint in  $(\Delta^*)^n$. We denote by $V_\ZZ$ the fiber
$\VV_{x_{0}}$ of $\VV$ at $x_0$ (modulo torsion) and $V_\QQ= V_\ZZ
\otimes_\ZZ \QQ$.  We further denote by $Q_\ZZ$ the polarization form on $V_\ZZ$, and $Q_\CC$ its $\CC$-linear extension to $V_\CC$.

It follows from Borel's monodromy
theorem \cite[Lemma (4.5)]{Schmid} that the monodromy transformation $T_i \in
\G(\ZZ) \subset \GL(V_\ZZ)$, $1\leq i \leq n$, of the local system $\VV$, corresponding to counterclockwise simple
circuits around the various punctures, are
quasi-unipotent. Replacing $(\Delta^*)^n$ by a
finite \'etale cover if necessary we can assume without loss of
generality that all the $T_i$'s are unipotent. Let $N_i \in
\Fg_\QQ$, $1 \leq i \leq n$, be the logarithm of $T_i$; this is a nilpotent element in the (rational) Lie algebra $\Fg_\QQ$ of $\G$.

Let $\FH$ denote the Poincar\'e upper-half plane and $e(\; \cdot\;):=\exp(2 \pi i\;
\cdot\;): \FH \lo \Delta^*$ the uniformizing map of $\Delta^*$. Let $\FS_\FH
\subset \FH$ be the usual Siegel fundamental set $\{ (x, y)
\in \FH\; | \; 0<x < 1, \; 1 <y \}$.
Consider the commutative diagram
$$ 
\xymatrix{
\FS_\FH^{n}  \subset \FH^{n}
\ar[d]_{p= e(\cdot\;)^{n}}
\ar[r]^>>>>>>{\tilde{\Phi}} & D= G/M \ar[d]^{\pi} \\
(\Delta^*)^{n} \ar[r]_{\Phi} & S_{\Ga, G,M}\;\;, 
}
$$
where $\tilde{\Phi}$
is the lifting of $\Phi$ to the universal cover.
As the restriction $\exp(2 \pi i \;\cdot\;)_{|\FS_{\FH}}$ is $\RR_{\an,
  \exp}$-definable, the map $p_{|\FS_\FH^{n}}: {\FS_\FH}^{n} \lo
(\Delta^*)^{n}$ is $\RR_{\an,
  \exp}$-definable. We are reduced to proving that the composition
$
\xymatrix@1{{\FS_\FH}^{n} \ar[r]^{\ti{\Phi}} & G/M \ar[r]^\pi
&S_{\Ga, G,M}}$ is $\RR_{\an, \exp}$-definable.

\begin{lem} \label{l1}
The map $\tilde{\Phi} : {\FS_\FH}^{n}\lo G/M$
is $\RR_{\an, \exp}$-definable.
\end{lem}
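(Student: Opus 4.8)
The plan is to reduce the definability of $\tilde\Phi$ on ${\FS_\FH}^n$ to the nilpotent orbit theorem plus \Cref{finitely many Siegel}. First I would recall Schmid's nilpotent orbit theorem: writing $\bz = (z_1,\dots,z_n) \in \FH^n$, the lifted period map has the form $\tilde\Phi(\bz) = \exp\!\big(\sum_j z_j N_j\big)\cdot \psi(e(z_1),\dots,e(z_n))$, where $\psi$ is a holomorphic map from $\Delta^n$ into the compact dual $\check D$ of $D$ (the "untwisted" period map), extending holomorphically across the punctures. Since $\psi$ is holomorphic on a neighbourhood of the closed polydisk after shrinking, its restriction to a relatively compact polydisk is $\RR_\an$-definable; and $(z_1,\dots,z_n)\mapsto (e(z_1),\dots,e(z_n))$ restricted to ${\FS_\FH}^n$ is $\RR_{\an,\exp}$-definable as already noted. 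The factor $\exp(\sum_j z_j N_j)$ is polynomial in the $z_j$ (the $N_j$ being nilpotent), hence $\RR_\alg$-definable in $\Ree z_j, \Ima z_j$. So the composite landing in $\check D$ is $\RR_{\an,\exp}$-definable.

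The difficulty is that this only shows definability as a map to $\check D$ (or to some ambient algebraic variety such as a flag variety or a Grassmannian of the relevant Hodge flags), not to $D = G/M$ with its intrinsic semi-algebraic/definable structure from \Cref{definability}. The image does land in $D$, but $D$ is only a semi-algebraic \emph{subset} of $\check D$, and more importantly the $\RR_\alg$-structure on $D=G/M$ coming from \Cref{G/M semi} must be compared with the one induced from $\check D$. Here I would invoke \Cref{finitely many Siegel}: it gives finitely many Siegel sets $\FS_i \subset G/M$, $i\in I$, with $\tilde\Phi({\FS_\FH}^n)\subset \bigcup_{i\in I}\FS_i$ (applying it with suitable $R,\eta$ — note ${\FS_\FH}^n$ is contained in some $\FH^n_{R,\eta}$ since on $\FS_\FH$ one has $0<\Ree z<1$ and $\Ima z>1$). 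Each Siegel set $\FS_i$ is semi-algebraic in $G$, and on a Siegel set the map $G\to G/M$ and the inclusion $\FS_i\hookrightarrow \check D$ are both semi-algebraic with semi-algebraic image; more precisely, the chart $\pi_i:\FS_i\to V_i$ from the atlas in \Cref{atlas} is $\RR_\alg$-definable, and the inclusion $\FS_i\hookrightarrow\check D$ is semi-algebraic. Thus on each piece $\tilde\Phi^{-1}(\FS_i)$ the map $\tilde\Phi:{\FS_\FH}^n\to G/M$ equals the composite of the $\RR_{\an,\exp}$-definable map to $\check D$ (landing in the semi-algebraic set $\FS_i\subset\check D$) with the semi-algebraic identification $\FS_i\cong$ (its image in $G/M$). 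Definability is a local property on the finite cover $\{\tilde\Phi^{-1}(\FS_i)\}_{i\in I}$ of ${\FS_\FH}^n$ — and these preimages are $\RR_{\an,\exp}$-definable since $\FS_i\subset\check D$ is semi-algebraic and $\tilde\Phi$ is $\RR_{\an,\exp}$-definable into $\check D$ — so $\tilde\Phi$ is $\RR_{\an,\exp}$-definable into $G/M$.

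The main obstacle, as indicated, is the matching of structures on $D$: one must check that the semi-algebraic structure on $G/M$ of \Cref{G/M semi} is compatible, \emph{on Siegel sets}, with the one inherited from the algebraic variety $\check D$ in which $D$ sits equivariantly as an open $G$-orbit. I would handle this by observing that $G$ acts algebraically on $\check D$ (it is a homogeneous space of the complexification $\G_\CC$), so the orbit map $G\to D\subset\check D$ at a chosen base Hodge flag is semi-algebraic with fibers the cosets of $M$; comparing with the semi-algebraic quotient $G\to G/M$ produced in \Cref{G/M semi} (via an invariant-theoretic orbit map into some $\RR^d$), one gets a semi-algebraic bijection $G/M\to D$, which is a semi-algebraic isomorphism onto its image by o-minimal triviality / the fact that a continuous semi-algebraic bijection between semi-algebraic sets is a homeomorphism with semi-algebraic inverse on each piece of a finite stratification — and in any case one only needs this over the finitely many semi-algebraic Siegel sets $\FS_i$, where everything is a semi-algebraic subset of a compact semi-algebraic set. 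Granting that comparison, the rest is the bookkeeping above: nilpotent orbit theorem for the $\RR_{\an,\exp}$-definability into $\check D$, and \Cref{finitely many Siegel} to confine the image to finitely many semi-algebraic Siegel sets so that descent to the intrinsic structure on $G/M$ is legitimate.
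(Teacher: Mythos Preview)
Your argument is correct, but you have made the proof substantially more complicated than necessary by importing \Cref{finitely many Siegel} into it. The paper's proof of \Cref{l1} does \emph{not} use \Cref{finitely many Siegel} at all: it simply invokes the nilpotent orbit theorem to write $\tilde\Phi(z)=\exp(\sum_j z_jN_j)\cdot\Psi(p(z))$, observes that $\Psi\circ p$ is $\RR_{\an,\exp}$-definable into $\check D$, that $\exp(\sum z_jN_j)$ is polynomial in the $z_j$, that the $\G(\CC)$-action on $\check D$ is algebraic, and that $D$ is a semi-algebraic subset of $\check D$. That is the whole proof. The appeal to \Cref{finitely many Siegel} comes only \emph{after} \Cref{l1}, in order to show that the further composite $\pi\circ\tilde\Phi:\FS_\FH^n\to S_{\Gamma,G,M}$ is definable; it is the passage to the arithmetic quotient, not to $G/M$, that requires confining the image to finitely many Siegel sets.

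The ``difficulty'' you identify---matching the semi-algebraic structure on $G/M$ from \Cref{G/M semi} with the one induced from $\check D$---is real but is resolved globally and directly, exactly as you yourself sketch in your final paragraph: the orbit map $G\to D\subset\check D$ is algebraic with isotropy $M$, so by the universal property of the semi-algebraic quotient $G\to G/M$ the induced bijection $G/M\to D$ is semi-algebraic, and a semi-algebraic bijection has semi-algebraic inverse. No restriction to Siegel sets is needed for this comparison. In effect you have merged \Cref{l1} with the step that follows it; the paper keeps them separate, which isolates where the genuine difficulty (\Cref{finitely many Siegel}) enters.
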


\begin{proof}
The nilpotent orbit theorem \cite[(4.12)]{Schmid} states that
(after maybe shrinking the polydisk) the map
$\tilde{\Phi}: {\FS_\FH}^{n}  \lo G/M$ is of
the form $$\tilde{\Phi} (z) = \exp \left(\sum_{j=1}^{n}  z_j N_j\right) \cdot \Psi(
p( z))$$
for $\Psi: \Delta^n \lo \check{D}$ a
holomorphic map and $\check{D} \supset D$ the compact dual of $D$. The
map $\Psi$ is the restriction to a relatively compact set of a real
analytic map. As $p_{|{\FS_\FH}^{n}}: {\FS_\FH}^{n} \lo
\Delta^n$ is $\RR_{\an, \exp}$-definable, it follows that $(z \mapsto
\Psi(p(z))$ is $\RR_{\an, \exp}$-definable.

The action of $\G(\CC)$ on the compact dual $\check{D}$ is algebraic, hence $\RR_{\an, \exp}$-definable; $D$ is a
semi-algebraic subset of $\check{D}$ and $\exp (\sum_{j=1}^n  z_j
N_j)$ is a polynomial in the $z_j$'s as the $N_j$'s are nilpotent. 

Hence the result.
\end{proof}

\begin{rem}
Notice that \Cref{l1} appears also in \cite[Lemma 3.1]{BaT}.
\end{rem}

It moreover follows from \Cref{finitely many Siegel}, proven below, that there exist finitely many Siegel sets $\FS_i$ $(i \in I)$
for $G/M$ such that $\tilde{\Phi}({\FS_\FH}^{n}) \subset \bigcup_{i
  \in I}\FS_i$. As $\pi_{|\FS_i}: \FS_i \lo
S_{\Ga, G, M}$, $i \in I$, is $\RR_{\an}$-definable by 
\Cref{definability}(2), and the set $I$ is finite, we deduce from \Cref{l1} that 
$ \pi \circ \tilde{\Phi} : {\FS_\FH}^{n}
\lo S_{\Ga, G,M}$
is $\RR_{\an, \exp}$-definable. This concludes the proof of
\Cref{polydisc}, hence of
\Cref{definability period map}, assuming \Cref{finitely many Siegel}.
\end{proof}

\subsection{Roughly polynomial functions}
Before the proof of \Cref{finitely many Siegel} we discuss a class of
functions with the same asymptotics as the Hodge form.

\begin{defi}\label{rough}Let $\Sigma_n\subset\FS_\FH^n$ be the region
  $0< x_i< 1 $ and $y_1\geq\cdots \geq y_n> 1$ for
  $x_i=\mathrm{Re}\, z_i$ and $y_i=\mathrm{Im}\, z_i$.  Let $\cO$ be
  the ring of real restricted analytic functions on $\Delta^n$ pulled
  back to $\Sigma_n$ via $p:\FH^n\to(\Delta^*)^n$, $\cO[x,y,y^{-1}]$
  the ring of polynomials in
  $x_1,\ldots,x_n,y_1,\ldots,y_n,y_1^{-1},\dots,y_n^{-1}$ with
  coefficients in $\cO$, and $\cO(x,y)$ its fraction field.  By a
  \emph{monomial} we mean an element of $\cO[x,y,y^{-1}]$ of the form
  $y_1^{s_1}\cdots y_n^{s_n}$ for integers $s_i\in\Z$.  We say a
  function $f\in \cO(x,y)$ is \emph{roughly monomial} if it is within
  a multiplicatively bounded constant of a monomial on $\Sigma_n$.  We
  say that $f\in\cO(x,y)$ is \emph{roughly polynomial} if it is of the
  form $\frac{g}{h}$ where $g\in\cO[x,y,y^{-1}]$ and $h$ is roughly
  monomial.  Note that roughly polynomial functions form a ring which
  we denote $\mathcal{T}_n$. 
\end{defi}

In the following we write $f\ll g$ to mean $f<Cg$ for a constant $C>0$ and $f\sim g$ to mean $f\ll g$ and $g\ll f$.  Thus, $f\in\cO(x,y)$ is roughly monomial if $f\sim y_1^{s_1}\cdots y_n^{s_n}$.  The next lemma will allow us to understand the asymptotics of roughly polynomial functions by restricting to curves.

\begin{lem}\label{check on curves}
Let $f,g\in\cO(x,y)$ with $f$ roughly polynomial and $g$ roughly monomial. Assume that $|f|\ll |g|$ when restricted to any set of the 
form $$\Sigma_n \cap
\{\alpha_1z_1+\beta_1=\alpha_2z_2+\beta_2=\cdots=\alpha_{n_0}z_{n_0}+\beta_{n_0},
z_{n_0+1}=\zeta_{n_0+1},\dots,z_n=\zeta_n\}$$ for some $1\leq n_0\leq n$, $\zeta_{n_0+1},\dots,\zeta_n\in \FH$, $\alpha_1,\dots,\alpha_{n_0}\in \QQ_+^{n_0}$, and $\beta_1,\ldots,\beta_{n_0}\in \RR$. Then
$|f|\ll |g|$ on all of $\Sigma_n$.
\end{lem}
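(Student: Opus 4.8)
The plan is to proceed in three steps: first remove the factor $g$ to reduce to a boundedness statement; then classify the monomials of a roughly polynomial function that can grow on $\Sigma_n$; and finally use the hypothesis, one admissible curve at a time, to rule out any such growing monomial. Since $g$ is roughly monomial, $g\sim y^c$ for some $c\in\ZZ^n$ (writing $y^c:=y_1^{c_1}\cdots y_n^{c_n}$), and since $f$ is roughly polynomial, $f=P/Q$ with $P\in\cO[x,y,y^{-1}]$ and $Q\sim y^d$. Then on $\Sigma_n$, and hence on every subset of it, $|f|/|g|\sim|P'|$ where $P':=P\,y^{-c-d}$ again lies in $\cO[x,y,y^{-1}]$, so ``$|f|\ll|g|$ on a set of the listed form'' is equivalent to ``$|P'|\ll1$ on that set.'' It therefore suffices to prove: if $P'\in\cO[x,y,y^{-1}]$ satisfies $|P'|\ll1$ on every curve of the listed form, then $|P'|\ll1$ on $\Sigma_n$ (once $P'$ is shown to have no growing monomial, the resulting bound is automatically uniform on $\Sigma_n$, as required).

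Now expand each $\cO$-coefficient of $P'$ in its convergent power series in $q,\bar q$ and regroup, so that $P'$ becomes a locally uniformly convergent sum of terms $c\,q^k\bar q^{\,l}x^\sigma y^\tau$ with $c$ constant and with the $x$- and $y$-degrees of the $x^\sigma y^\tau$ bounded. Since $x_i\in(0,1)$ and $|q_i|=e^{-2\pi y_i}$, the modulus of such a term is at most $e^{-2\pi\langle k+l,\,y\rangle}|y^\tau|$, and an elementary analysis of this quantity on the cone $\{y_1\ge\cdots\ge y_n>1\}$ (examine its behaviour along the recession rays) shows it is unbounded exactly when there is an index $m$ with $k+l$ supported in $\{m+1,\dots,n\}$ and $\tau_1+\cdots+\tau_m>0$; call such a term \emph{dangerous}, of threshold $m$. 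If $P'$ has no dangerous term, then $|P'|$ is bounded on $\Sigma_n$ by summing the crude bounds on the individual terms — the sum converging because the $\cO$-coefficients, being restricted analytic, are bounded on a neighbourhood of the image of $\Sigma_n$ — and we are done.

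Otherwise, let $m$ be the least threshold of a dangerous term of $P'$, and consider an admissible curve with $n_0=m$: the coordinates $z_1,\dots,z_m$ escape to infinity along a rational direction, while $z_{m+1}=\zeta_{m+1},\dots,z_n=\zeta_n$ are frozen with $\Ima\zeta_i$ large. Along such a curve the escaping coordinates are mutually comparable, so all of $q_1,\dots,q_m$ decay at the same exponential rate: every term whose $k+l$ meets $\{1,\dots,m\}$ is uniformly exponentially small, whereas every term whose $k+l$ is supported in $\{m+1,\dots,n\}$ has $q^k\bar q^{\,l}$ \emph{constant} along the curve. Hence, up to an error tending to $0$, $P'$ restricted to the curve is a genuine Laurent polynomial in $z_1,\dots,z_m$ whose coefficients are those constants and whose top-degree part in the ray parameter is governed precisely by the dangerous terms of threshold $m$; by the hypothesis this top-degree part must vanish. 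Letting the data vary — over all admissible rational directions (dense in an open cone), all admissible frozen tails $\zeta$ (an open set), and all offsets — and using that the functions $q^k\bar q^{\,l}$ of the frozen variables are linearly independent for distinct $(k,l)$, together with the polynomial dependence of the remaining factors on the direction, the offsets and the $\Ima\zeta_i$, one forces the coefficients of the dangerous terms of threshold $m$ themselves to vanish. Descending through the lower powers of the ray parameter and then through the possible thresholds $m$, one concludes that $P'$ has no dangerous term — the desired contradiction. The delicate part is this last genericity/independence argument and the asymptotic bookkeeping underpinning it; the structural reason it is unavoidable — in particular why testing on a single family of rational rays does not suffice — is the ``dwarfing'' phenomenon, whereby a monomial such as $q_2\,y_1^D$ is unbounded on $\Sigma_n$ yet bounded on every curve along which $z_2$ escapes, so that a dangerous term of threshold $m$ is seen only by the curves with $n_0=m$ and a completely frozen tail. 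This is exactly why the whole family of admissible curves (all $n_0$, all rational directions, all admissible frozen tails) must be brought to bear, and why it is essential that each single curve sees only mutually comparable escaping coordinates.
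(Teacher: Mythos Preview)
Your reduction to the case $g=1$ and the identification of ``dangerous'' monomials is sound, and the overall strategy---test on curves, isolate the growing terms, force their coefficients to vanish---is correct in spirit. But the proof has a genuine gap at precisely the point you flag as ``delicate'': the extraction of individual coefficients from the vanishing of the top-order part along curves. After fixing the threshold $m$, isolating the leading $v$-power, and separating $(\sigma',\tau')$ via the direction $r$ and offsets $s$, what remains is an identity
\[
\sum_{\sigma'',\tau''} B_{\sigma'',\tau''}(q'',\bar q'')\,x''^{\sigma''}y''^{\tau''}=0
\]
on an open set of frozen tails $\zeta''$. You appeal to ``linear independence of $q^k\bar q^{\,l}$'' together with ``polynomial dependence on~$\Ima\zeta_i$'', but the $B$'s themselves depend non-polynomially on $\zeta''$ through $q'',\bar q''$, so these two ingredients cannot simply be applied side by side: one needs an argument that the family $\{q''^{k}\bar q''^{\,l}x''^{\sigma''}y''^{\tau''}\}$ is linearly independent as functions of $\zeta''$ (even for the infinite $(k,l)$-sum). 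This can be done, e.g.\ by Fourier expansion in $x''$ followed by separating exponential rates in $y''$, but you do not carry it out. A second, smaller gap is the claim that absence of dangerous terms gives a \emph{uniform} bound on $P'$: summing the crude term-by-term bounds over the infinite $(k,l)$-expansion requires a summability estimate you do not supply.

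The paper's proof sidesteps both issues by organizing the argument as an induction on $n$ rather than a threshold-by-threshold analysis of a full $(q,\bar q)$-expansion. It keeps the finite decomposition $f=\sum_j a_j\,x_1^{j_1}y_1^{j_2}$ with $a_j\in\cO[x_2,\ldots,y_n^{-1}]$ (absorbing the $t_1$-dependence into an exponentially small error), then substitutes $z_1=mz_2+c$ to land in $\Sigma_{n-1}$ where the induction hypothesis applies directly, and finally isolates the individual $a_j$ by taking finite differences in $c$ (to peel off the $x_1$-power) and a Vandermonde-type linear combination over integer values of $m$ (to peel off the $y_1$-power). This replaces your abstract genericity argument with a concrete, two-line constructive extraction and avoids any infinite-sum bookkeeping.
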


\begin{proof} 

By clearing denominators it is clearly sufficient to handle the case where $g$ is a monomial, and in fact where $g=1$ since $y_i$ is a unit in $\cO[x,y,y^{-1}]$. We proceed by induction on
$n$, with the case $n=1$ being immediate from the assumptions.
Separating out the powers of $x_1$ and $y_1$ we may write $f=\sum_j
a_j x_1^{j_1}y_1^{j_2}$, where the sum is over finitely many pairs
$j=(j_1,j_2)$ in $\ZZ^2$. 
Now, the $a_j$ are real analytic functions in $t_1:= e(z_1), x_2,y_2,y_2^{-1},t_2:=e(z_2),\dots,x_n,y_n,y_n^{-1},t_n:=e(z_n)$ up to an error on $\Sigma_n$ of $O(y_1^{A}e^{-2\pi y_1})$ for some positive $A>0$. Since this 
decays faster than any monomial, we may restrict to the case where the
$a_j$ are independant of $t_1$.  

The lemma will follow immediately once we prove the following claim:

\medskip

\noindent\textbf{Claim:} For each $j$ we have that $|a_j|y_1^{j_2} \ll 1$ on $\Sigma_n$. 

\medskip

First, we claim that all the powers $j_2$ of $y_1$ in $f$ are
non-positive. If this is not the case, we can fix the other variables
at a point where the coefficient of a positive power of $y_1$ is
non-zero, and get a contradiction as $y_1\to\infty$.  

Now, since the powers of $y_1$ are non-positive and $y_1\geq y_2,$ it is sufficient to prove the claim when we restrict to $y_1=Cy_2$ for any $C>0$. Consider for each positive integer $m$ and real number $c$ 
the function $f_{m,c}$ which we obtain from $f$ by setting
$z_1=mz_2+c$. Note that the assumptions of the lemma still apply to
$f_{m,c}$, and it follows by induction on $n$ that $f_{m,c}\ll 1$ on all of
$\Sigma_{n-1}$ (taken with respect to the ordered coordinates
$z_2,\ldots,z_n$).  
Thus, we have that $\sum_j a_j (mx_2+c)^{j_1}(my_2)^{j_2} \ll1 $ on all of $\Sigma_{n-1}$.  

Let $j_{\mathrm{max}}=(r_1,r_2)$ be the lexicographically maximal $j$ that occurs among all $j$ with $a_j\neq 0$.  Let $F_{m}:=\frac{1}{r_1!}\sum_{i=0}^{r_1} \binom{r_1}{i}(-1)^i f_{m,i}$. Note that $F_{m}\ll 1$ on $\Sigma_{n-1}$ and also that $F_{m} = \sum_{j=(r_1,j_2)} a_Jm^{j_2} y_2^{j_2}$. 
By taking a finite (constant) linear combination of the $F_{m}$ with distinct values of $m$ we may isolate the $a_{j_{\mathrm{max}}}y_2^{r_2}$ term, from which it follows that 
$a_{j_{\mathrm{max}}}y_2^{r_2}\ll 1$, thus proving the claim for this monomial. Subtracting it off and proceeding inductively, the claim follows.

\end{proof}

\subsection{The Hodge form is roughly polynomial}

For $u,v\in V_\CC$ we denote by $h(u,v): \Sigma_n\to\CC$ the function
mapping $z\in\Sigma_n$ to the Hodge form $h_z(u,v):=Q_\CC(C_z u,v)$ of
$u$ and $v$ at $\tilde\Phi(z)$, where $C_z$ is the Weil operator at
$\tilde{\Phi}(z)$.  Likewise we denote $h_z(u)=h_z(u,u)$.  The main result of this section is the following:

\begin{prop}\label{roughly two} For any $u,v\in V_\CC$, the function $h(u,v)$ is
roughly polynomial. 
\end{prop}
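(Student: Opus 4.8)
The plan is to deduce the proposition from two facts about $h(u,v)$, both flowing from the nilpotent orbit theorem: \emph{(a)} $h(u,v)$ lies in $\cO_\CC(x,y)$ (where $\cO_\CC:=\cO\otimes_\RR\CC$) and carries an \emph{explicit polynomial} denominator $D(z)\in\cO_\CC[x,y]$, i.e. $D\cdot h(u,v)\in\cO_\CC[x,y]$; and \emph{(b)} $|D|^2=D\overline D$ is roughly monomial. Granting these, $h(u,v)=\bigl(|D|^2 h(u,v)\bigr)/|D|^2$ exhibits $h(u,v)$ as roughly polynomial (for $u,v$ in the flat $\QQ$-structure $h(u,v)$ is real, since the Weil operator is a real operator, so $|D|^2 h(u,v)=D\cdot\overline{D\,h(u,v)}$ lies in the real ring $\cO[x,y]$; the general case follows by $\CC$-bilinearity of $h$).

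For \emph{(a)}: by the nilpotent orbit theorem \cite[(4.12)]{Schmid}, $\tilde\Phi(z)=\exp\bigl(\sum_j z_jN_j\bigr)\cdot\Psi(p(z))$ with $\Psi\colon\Delta^n\to\check D$ holomorphic of relatively compact image. Since the $N_j$ are nilpotent and commute, writing $z_j=x_j+iy_j$ the entries of $\exp(\sum_j z_jN_j)$ and $\exp(\sum_j\bar z_jN_j)$ are polynomials in $x,y$, while a holomorphic frame of the tautological flag bundle of $\check D$ composed with $p$ has entries in $\cO_\CC$; hence $F^\bullet_z$ and $\overline{F^\bullet_z}$ admit, in the fixed flat basis, frames with entries in $\cO_\CC[x,y]$. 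For each $p$ one has $V_\CC=F^p_z\oplus\overline{F^{k-p+1}_z}$, so Cramer's rule expresses the projector $\rho^p_z$ onto $F^p_z$ along $\overline{F^{k-p+1}_z}$ as a matrix with entries in $\tfrac1{\delta_p(z)}\cO_\CC[x,y]$, where $\delta_p(z)\in\cO_\CC[x,y]$ is the determinant of the corresponding block matrix (nonzero on $\FS_\FH^n$ because the sum is direct). As the Hodge projector onto $H^{p,k-p}_z$ is $\rho^p_z-\rho^{p+1}_z$, the Weil operator is $C_z=\sum_p i^{2p-k}(\rho^p_z-\rho^{p+1}_z)$; thus $D(z):=\prod_p\delta_p(z)$ works, and $h(u,v)=Q_\CC(C_zu,v)\in\cO_\CC(x,y)$ with $D\cdot h(u,v)\in\cO_\CC[x,y]$.

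For \emph{(b)}: fix $p$ and let $\omega_p(z)$ be the wedge of the chosen frame of $F^p_z$ — a nowhere-vanishing holomorphic section of the line bundle $\bigwedge^{f_p}F^p_z\subset\bigwedge^{f_p}\VV$, where $f_p=\dim F^p_z$ — and $\eta_p(z)$ the wedge of the chosen frame of $\overline{F^{k-p+1}_z}=\overline{\bigwedge^{\dim V-f_p}F^{k-p+1}_z}$. Using that the Hodge decomposition is Hodge-orthogonal and that $F^p_z$ and $\overline{F^{k-p+1}_z}$ are spanned by complementary families of Hodge pieces, one has $F^p_z\perp_z\overline{F^{k-p+1}_z}$; together with the fact that the determinant of the Hodge Gram matrix of a flat frame equals the nonzero constant $\det(C_z)\det(Q_\CC)$, expressing $\delta_p$ in a Hodge-orthonormal frame gives $|\delta_p(z)|^2=c_p\cdot\|\omega_p(z)\|_z^2\cdot\|\eta_p(z)\|_z^2$ with $c_p>0$ constant. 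Now $\bigwedge^{f_p}F^p_z$ is the top nonzero step of the Hodge filtration of the weight-$kf_p$ PVHS $\bigwedge^{f_p}\VV$, hence a line sub-bundle, pure of a single weight for the relevant (tower of) relative monodromy weight filtrations; so by the norm estimates of Cattani–Kaplan–Schmid and Kashiwara \cite{CKS},\cite{Ka} its non-vanishing section satisfies $\|\omega_p(z)\|_z^2\sim\prod_j (y_j/y_{j+1})^{e_j}$ on $\Sigma_n$ (with $y_{n+1}:=1$) for integers $e_j$, and likewise for $\eta_p$ (using $\|\bar\zeta\|_z=\|\zeta\|_z$). Hence each $|\delta_p|^2$, and so $|D|^2=\prod_p|\delta_p|^2$, is comparable to a single monomial in the $y_j$, i.e. $|D|^2$ is roughly monomial.

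The hard part is the last comparison in Step \emph{(b)}: pinning down the exponent vector of the line bundle $\bigwedge^{f_p}F^p_z$ and proving the two-sided monomial bound for its Hodge norm. This is exactly where the asymptotics of Hodge norms from \cite{CKS},\cite{Ka} and Schmid's one-variable $SL_2$-orbit theorem \cite{Schmid} enter; concretely, one restricts $\VV$ to the curves featuring in \Cref{check on curves}, applies the one-variable theory there to read off the bound, and then globalizes it to $\Sigma_n$ via \Cref{check on curves} (the ratio $|\delta_p|^2/\bigl(\prod_j(y_j/y_{j+1})^{e_j}\bigr)$ being roughly polynomial by Step \emph{(a)}). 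Note that only the Hodge-norm estimates are used, not the full multivariable $SL_2^n$-orbit theorem of \cite{CKS}.
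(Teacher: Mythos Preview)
Your core strategy in Steps~(a) and~(b) is correct and gives a genuine alternative to the paper's argument. The paper proceeds differently: it fixes a basis $w_i\in I^{\alpha_i}$ adapted to the Deligne bigrading, refines the Hodge filtration to a full flag $K^\bullet$, and runs Gram--Schmidt for the form $B(u,v)=Q_\CC(u,\bar v)$ on the moving frame $\gamma(z)w_i$. The resulting orthogonalized basis $\tilde w_i$ yields explicit formulas \eqref{eq1}--\eqref{eq111} expressing each $h(\tilde u_i,\tilde v_i)$ as a ratio whose denominator is $B(\gamma w_{\det K^{i}})$; this is then shown to be roughly monomial via the Hodge-norm asymptotics (\Cref{asymptotics}) applied to the wedge powers. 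Your approach bypasses the flag refinement and Gram--Schmidt by writing the Weil operator directly through the projectors $\rho^p_z$ onto $F^p_z$ along $\overline{F^{k-p+1}_z}$, isolating a single denominator $D=\prod_p\delta_p$, and identifying $|\delta_p|^2$ with (a constant times) the product of Hodge norms of the top wedges of $F^p_z$ and $\overline{F^{k-p+1}_z}$. Both routes ultimately reduce the ``roughly monomial denominator'' to the same CKS/Kashiwara asymptotics on wedge powers; yours is arguably more direct conceptually, while the paper's gives finer termwise control that it reuses elsewhere.

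Your final paragraph, however, should be dropped or rewritten. The multivariable norm estimate you need in Step~(b) is exactly \Cref{asymptotics} (applied to the wedge power, noting that the top wedge of the limiting $F^p$ lies in a single $I^\alpha$-component since the Deligne splitting is multiplicative, and using $\|\overline{\zeta}\|_z=\|\zeta\|_z$ for $\eta_p$); as the paper remarks, Kashiwara's proof of this does not use the $SL_2^n$-orbit theorem, so no curve argument is required. Moreover, your proposed use of \Cref{check on curves} only yields an upper bound $|f|\ll|g|$: the reciprocal $|\delta_p|^{-2}\cdot(\text{monomial})$ is not a priori roughly polynomial, so you cannot invoke the lemma again for the lower bound. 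Either cite \Cref{asymptotics} directly (together with the short comparison $h(\gamma u)\sim h(e^{z\cdot N}u)$, which the paper proves as a separate lemma and which you also need to pass from $e^{z\cdot N}$ to $\gamma(z)$), or else supply an independent argument for the lower bound.
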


Let us introduce the notations needed for proving \Cref{roughly two}.  Let $C\subset \g_\RR$ be the convex cone generated by the monodromy logarithms
$N_i$.  Recall from \cite[p.468]{CKS} that for each $M\in C$ we have a
weight filtration $W(M)$ on $V_\ZZ$ (centered at 0).  Let $M_j=\sum_{i=1}^j N_i$ and $F=\Psi(0)\in \check D$.  We write $\tilde\Phi(z)=\gamma(z)F$
where $\gamma:\FH^n\to \G(\CC)$ is a lift of the form $\gamma(z)=e^{z\cdot N}g(p(z))$ for $g:\Delta^n\to \G(\C)$ a holomorphic function taken as follows.  Writing $\g^{p,q} $ for the Deligne splitting of the mixed Hodge structure on $\g_\RR$ induced by $(F,W(M_n))$, there is a unique holomorphic lift $v(t)\in \bigoplus_{p<0}\g^{p,q}$ with $\Psi(t)=e^{v(t)}F$.

Recall that there is a splitting (see \cite[Lemma 2.4.1]{Ka}) 
$V_\CC=\bigoplus_{p,q_1,\ldots,q_r}I^{p,q_1,\ldots,q_n}$
such that
$F^s=\bigoplus_{p\geq s}I^{p,q_1,\ldots,q_n}$ and $W(M_j)_s=\bigoplus_{p+q_j\leq s}I^{p,q_1,\ldots,q_n}$.
For simplicity we denote $\alpha=(p,q_1,\ldots,q_n)$.  There is also a rational splitting $V_\QQ=\bigoplus_{s_1,\ldots,s_n}J^{s_1,\ldots,s_n}$ such that $W(M_j)_s=\bigoplus_{s_j\leq s}J^{s_1,\ldots,s_n}$, and again for simplicity we denote $\sigma=(s_1,\ldots,s_n)$.

\Cref{roughly two} follows immediately from:

\begin{lem}\label{roughly}  Let $u\in I^\alpha$ and $v\in I^\beta$.  
\begin{enumerate}
\item
  $h(u)$ is roughly monomial.
\item $h(\gamma(z)u)$ is roughly monomial.
\item $h(u,v)$ is roughly polynomial.
\end{enumerate}
\end{lem}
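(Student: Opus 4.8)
The plan is to establish the three claims together by a single asymptotic analysis of the Hodge norm, using the work of Cattani--Kaplan--Schmid \cite{CKS} and Kashiwara \cite{Ka} on the behavior of Hodge norms near a point of multi-variable degeneration. The starting point is the formula $\tilde\Phi(z) = e^{z\cdot N}g(p(z))F$ together with the splittings $V_\CC = \bigoplus_\alpha I^\alpha$ and $V_\QQ = \bigoplus_\sigma J^\sigma$ adapted to the filtrations $F$ and $W(M_1) \subset \cdots \subset W(M_n)$. The central input is the fundamental norm estimate: for $u \in I^\alpha$ with $\alpha = (p,q_1,\dots,q_n)$, one has on $\Sigma_n$ (where $y_1 \geq \cdots \geq y_n > 1$)
\[
h_z(u) \;\sim\; \prod_{j=1}^n \left(\frac{y_j}{y_{j+1}}\right)^{q_j - w},
\]
with the convention $y_{n+1} = 1$ and $w$ the weight; equivalently $h_z(u) \sim y_1^{q_1 - q_2} y_2^{q_2 - q_3} \cdots y_n^{q_n - w}$ up to a multiplicatively bounded constant, where the boundedness is uniform in the compact directions $t_j = e(z_j)$ and the $x_j$ up to restricted-analytic error. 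This is essentially \cite[Theorem]{CKS} reorganized via the iterated cone structure, in the form used in \cite[Lemma]{Ka}; I would cite it directly. This estimate immediately gives part (1), since the right-hand side is a monomial in the $y_j$ with integer exponents.

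For part (2), I would analyze $h(\gamma(z)u)$ where $\gamma(z) = e^{z\cdot N}g(p(z))$. The holomorphic factor $g(p(z))$ is a restricted-analytic $\G(\CC)$-valued function of the $t_j$, so it moves $u \in I^\alpha$ within a fixed bounded family and changes the Hodge norm only by a multiplicatively bounded constant (the Hodge metric varies continuously and the relevant parameter domain is relatively compact after the nilpotent-orbit reduction). The genuinely new ingredient is the action of $e^{z\cdot N} = e^{\sum z_j N_j}$. Expanding $e^{\sum z_j N_j} u$ as a polynomial in the $z_j$ with coefficients in various $I^{\alpha'}$, and using that each $N_j$ shifts the $W(M_j)$-weight by $-2$ (and is compatible with the relative weight filtrations), one checks that each term $z_1^{a_1}\cdots z_n^{a_n} \cdot (\text{vector of type }\alpha')$ contributes, by part (1), a size $\sim |z_1|^{a_1}\cdots|z_n|^{a_n} \cdot y_1^{q'_1-q'_2}\cdots$, and on $\Sigma_n$ we have $|z_j| \sim y_j$. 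A short bookkeeping argument with the exponents shows all these terms have the \emph{same} growth order as the dominant one (this is exactly the phenomenon that makes the nilpotent orbit a ``norm-preserving'' approximation), so the sum is again roughly monomial. The main obstacle is precisely this step: making the exponent bookkeeping rigorous requires knowing that the monodromy weight filtrations $W(M_j)$ interact with the $N_i$ and with the splitting $\bigoplus I^\alpha$ in the standard compatible way, and that there are no cancellations in the leading term — here one uses the polarization, i.e. that $h$ is positive definite on each $I^\alpha$ so the dominant terms cannot cancel.

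For part (3), write $h(u,v)$ for $u \in I^\alpha$, $v \in I^\beta$. By the polarization relations $h_z$ is block-diagonal with respect to the Hodge decomposition at $\tilde\Phi(z)$, but $I^\alpha, I^\beta$ are the \emph{limiting} splitting, so $h(u,v)$ need not vanish; instead one estimates it by Cauchy--Schwarz, $|h(u,v)|^2 \leq h(u)\,h(v) \ll (\text{monomial})\cdot(\text{monomial})$, which shows $|h(u,v)|$ is bounded by a monomial. To see that $h(u,v)$ is genuinely roughly polynomial — i.e. a ratio $g/h$ with $g \in \cO[x,y,y^{-1}]$ and $h$ roughly monomial — I would argue that $h(u,v)$ is, by construction, an explicit $\cO$-linear-algebraic expression in the matrix entries of $C_z$, and that $C_z$ (or $e^{z\cdot N}$ conjugated appropriately) has entries that are polynomials in $x_j, y_j$ with restricted-analytic coefficients, divided by a positive roughly-monomial denominator coming from the normalization of the Hodge filtration; the bound from Cauchy--Schwarz then guarantees the denominator divides properly so that the quotient lies in $\mathcal{T}_n$. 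Finally, \Cref{roughly two} follows by bilinearity: an arbitrary pair $u,v \in V_\CC$ decomposes into components in the $I^\alpha$, and $h(u,v)$ becomes a finite $\cO$-linear combination of the roughly polynomial functions handled in part (3), hence roughly polynomial since $\mathcal{T}_n$ is a ring.
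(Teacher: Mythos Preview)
There are two genuine gaps in your plan.

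First, you never address why $h(u)$, $h(\gamma(z)u)$, or $h(u,v)$ lie in the field $\cO(x,y)$ at all. By Definition~\ref{rough}, ``roughly monomial'' means \emph{both} being in $\cO(x,y)$ \emph{and} satisfying $f\sim y^\sigma$; the CKS/Kashiwara estimate only gives the latter. The Weil operator $C_z$ is defined via the Hodge decomposition at $\tilde\Phi(z)$, which is not a priori a rational function of $x,y$ with restricted-analytic coefficients. The paper resolves this by an explicit Gram--Schmidt computation: orthonormalizing the basis $\gamma(z)w_i$ (adapted to a full flag refining $F^\bullet$) with respect to $B(u,v)=Q_\CC(u,\bar v)$ yields formulas expressing $h(u,v)$ as a finite sum of ratios whose numerators lie in $\cO[x,y,y^{-1}]$ and whose denominators are $B(\gamma w_{\det K^i})=h(\gamma w_{\det K^i})$ on wedge powers. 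Part~(2) applied to these wedge-power vectors shows the denominators are roughly monomial, and this is exactly what makes part~(3) come out as roughly polynomial. Your Cauchy--Schwarz bound gives the right size but not the structural statement, and your ``$C_z$ has entries in $\cO[x,y,y^{-1}]$ over a roughly monomial denominator'' is precisely what needs to be proven.

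Second, your argument for part~(2) is incorrect where you say $g(p(z))$ ``changes the Hodge norm only by a multiplicatively bounded constant'' because it is bounded on a compact parameter set. The Hodge form $h_z$ is highly unbounded on $\Sigma_n$ --- its eigenvalues are of size $y^\sigma$ --- so a bounded perturbation of $u$ can change $h_z(u)$ by an arbitrarily large factor. What is actually needed, and what the paper proves, is that Griffiths transversality forces $v(t)=\sum_i t_i v_i(t)$ with each $v_i$ annihilated by $\ad(N_j)$ for $j<i$; hence $g(t)-1=\sum_i t_i g_i(t)$ with $g_i$ preserving $W(M_j)$ for $j<i$. One then applies the norm asymptotics (\Cref{asymptotics}) to $e^{z\cdot N}g_i(t)u$ and uses the exponential decay of $|t_i|$ to absorb the uncontrolled $y_i,\ldots,y_n$ exponents, concluding $h(\gamma u)\sim h(e^{z\cdot N}u)$. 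Your expansion-and-bookkeeping sketch for $e^{z\cdot N}u$ is also off: the terms do \emph{not} all have the same growth order (lower-order terms are strictly smaller), and the correct statement $h(e^{z\cdot N}u)\sim$ monomial is exactly \Cref{asymptotics}(2), which should simply be cited.
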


\begin{proof}[Proof of \Cref{roughly}] 
The asymptotics of Hodge forms are well-studied, and we state the
precise result we will need: 

\begin{theor}[Theorem 5.21 of \cite{CKS} or Theorems 3.4.1 and 3.4.2 of \cite{Ka}]\label{asymptotics}  Let $u\in J^{s_1,\ldots,s_n}_\CC$.  Then on $\Sigma_n$ we have
\begin{enumerate}
\item $h(u)\sim (y_1/y_2)^{s_1}\cdots (y_{n-1}/y_n)^{s_{n-1}}y_n^{s_n}$;
\item $h\left(e^{z\cdot N}u\right)\sim (y_1/y_2)^{s_1}\cdots (y_{n-1}/y_n)^{s_{n-1}}y_n^{s_n}$.
\end{enumerate}
\end{theor}  

\begin{rem}We remark that Kashiwara's proof of \Cref{asymptotics} does
  not require the use of the $SL_2^n$-orbit theorem of \cite{CKS}. 
\end{rem}
 
Given \Cref{asymptotics}(1), proving part (1) of \Cref{roughly two} reduces to showing that $h(u)$ is in $\cO(x,y)$.  Choose a basis $w_i$ of $V_\CC$ such that each $w_i\in I^{\alpha_i}$ for some $\alpha_i=(p_i,q_1^i,\ldots, q_n^i)$ and the sequence $p_i $ is non-increasing.  Define an increasing filtration $K^\bullet$ as follows:  $K^j$ is the span of $w_1,\ldots, w_j$.  Note that $K^\bullet$ is a full flag refining the filtration $F^{-\bullet}$.   Define $B(u,v):=Q_\CC(u,\bar v)$ and for simplicity call $\gamma=\gamma(z)$.  An
$h$-orthogonal basis of the Hodge filtration at $\gamma F$ is obtained
from the $\gamma w_i$ by the Gram-Schmidt procedure (with respect to
$B$); call this basis $\tilde w_i$.  Let $u=\sum \tilde u_i$ with
$\tilde u_i$ a multiple of $\tilde w_i$ and likewise for $v$.  We then
have, denoting $w_{\det K^i}=w_1\wedge\cdots\wedge w_i$ and extending
the use of the symbols $B$ and $h$ to the corresponding hermitian forms on any wedge power (or
other tensor operation) of $V_\CC$:
\begin{equation}B(\tilde w_i)=\frac{B\left(\gamma w_{\det
        K^{i}}\right)}{B(\gamma w_{\det
      K^{i-1}})}\label{eq1}\end{equation} 
We 
also have 
\begin{equation}B(u,\tilde w_i)=\frac{B\left((\gamma w_{\det
        K^{i-1}})\wedge u,\gamma w_{\det K^{i}}\right)}{B(\gamma
    w_{\det K^{i-1}})}\;,\label{eq11}\end{equation} 
\begin{equation}h(\tilde u_i,\tilde v_i)=i^{2p_i-k} B(\tilde u_i,\tilde v_i)=i^{2p_i-k} \frac{B(u,\tilde w_i)B(\tilde w_i,v)}{B(\tilde w_i)}\;.\label{eq111}\end{equation}
Now, \eqref{eq1} and \eqref{eq11} are both in $\cO(x,y)$, so
$h(u,v)=\sum_i h(\tilde u_i,\tilde v_i)$ is in $\cO(x,y)$ as well. 
 
That $h(\gamma u)$ is in $\cO(x,y)$ similarly follows from the fact that the $B$-norm of the projection of $\gamma u$ to $\tilde w_i$ is likewise computed via
\begin{equation}B(\gamma u,\tilde w_i)=\frac{B(\gamma(w_{\det
      K^{i-1}}\wedge u),\gamma w_{\det K^{i}})}{B(\gamma w_{\det
      K^{i-1}})}.\end{equation} 
To finish the proof of part (2), we need the following lemma, which is also proven in \cite[Theorem 5.21]{CKS}.
\begin{lem}  On $\Sigma_n$ we have $h(\gamma u)\sim h\left(e^{z\cdot N}u\right)$.
\end{lem}
\begin{proof}Recall that $\gamma(z)=e^{z\cdot N}g(p(z))$ and $g(t)=e^{v(t)}$.  Griffiths transversality requires
$$e^{-\ad(v(t))}N_i+t_i\frac{1-e^{-\ad(v(t))}}{\ad(v(t))}\frac{\partial v}{\partial t_i}\in F^{-1}\g.$$
Since $N_i\in \g^{-1,-1}$ and $v(t)\in\bigoplus_{p<0}\g^{p,q}$, this implies that 
$$v(t)=\sum_{i=1}^n t_iv_i(t)$$
where $v_i(t)$ is a holomorphic function of $t_i,\ldots,t_n$ and
$\ad(N_j)v_i(t)=0$ for $j<i$.  Thus, $v_i(t)$ preserves each $W(M_j)$
for $j<i$.  It follows likewise that $g(t)=1+\sum_i t_i g_i(t)$ with
$g_i(t)\in \End(V_\C)$ preserving $W(M_j)$ for $j<i$.  Thus by
\Cref{asymptotics}, we have on $\Sigma_n$ 
$$h(e^{z\cdot N}g_i(t)u)\ll (y_1/y_2)^{s_1}\cdots (y_{i-1}/y_{i})^{s_{i-1}}(y_{i}/y_{i+1})^{s'_{i}}\cdots y_n^{s'_n} $$
for some $s'_{i},\ldots,s_n'$.  For any monomial $y^I$ only in
$y_i,\ldots, y_n$ and any $\epsilon>0$, we have that
$|t_i|^2y^I<\epsilon$ for sufficiently large $y_n$ and so the same is
true of $h(\gamma u-e^{z\cdot N}u)/h(e^{z\cdot N}u)$, whence the
claim. 
\end{proof}

Finally, $h(\gamma w_{\det K^i})=|B(\gamma w_{\det K^i})|$ is roughly
monomial by part (2).  It then follows that the same is true for
\eqref{eq1}, and thus that $h(\tilde u_i,\tilde v_i)$ is roughly
polynomial, as the numerator in \eqref{eq111} is clearly in
$\cO[x,y,y^{-1}]$. 
\end{proof}

\subsection{Proof of \Cref{finitely many Siegel}}

\begin{proof}
First, observe that the set $\FS_\FH^b$ is covered by finitely many sets
of the form $\Sigma_n$ (corresponding to the finitely many possible
orderings of the coordinates). Hence it is enough to show that
$\tilde\Phi(\Sigma_n)$ is contained in finitely many Siegel sets of
$D=G/M$.

The natural embedding $\G(\QQ)\subset \SL(V_\QQ)$ defines a natural
map $\iota:D\to X$, given by $\iota(x):= h_x$, where $X$ denotes the
symmetric space of positive definite symmetric forms on $V_\RR$. By
\cite[7.5]{BHC} the preimage of any Siegel set $\mathfrak{S}\subset X$
is contained in the union of finitely many Siegel sets of $D$.  It is thus enough to show that
$\iota\circ\tilde\Phi(\Sigma_n)$ is contained in finitely many Siegel
sets of $X$.

Siegel sets in $X$ can be understood in terms of reducedness of
positive definite forms with respect to a basis:
\begin{defi}\label{def reduced}  Given an integral (ordered) basis
  $e=\{e_i\}$ of $V_\ZZ$, a constant $C>0$, and a positive definite symmetric form $b$,
  we say $b$ is $(e,C)$-reduced if: 
\begin{enumerate}
\item $|b(e_i,e_j)| <Cb(e_i)$ for all $i,j$;
\item $b(e_i)<C b(e_j)$ for $i<j$;
\item $\prod_i b(e_i)<C \det(b)$.
\end{enumerate}
\end{defi}

\noindent
Given an integral (ordered) basis $e=\{e_i\}$ of $V_\ZZ$ and $C>0$
we define the subset $\mathfrak{T}_{e,C}=\{b\in X\mid b\mbox{ is
}(e,C)\mbox{-reduced} \}$.  By classical reduction theory, any $\mathfrak{T}_{e,C}$ is contained in a Siegel set of $X$, and any Siegel set of $X$ is contained in $\mathfrak{T}_{e,C}$ for some choice of $e$ and $C>0$ (see for example \cite[Prop. 2 p.18]{K90}).  Moreover, if $b$ is $(e',C')$-reduced and $e$ is a basis for which condition (3) of Definition \ref{def reduced} holds for some $C>0$, then $b$ will also be $(e,C'')$-reduced for some $C''=C''(e,C,e',C')>0$.
We are thus reduced to
proving the following:

\vskip1em

\noindent\textbf{Claim:} There is a basis $e=\{e_i\}$ of $H_\QQ$ and $C>0$ such
that $h_z$ is $(e,C)$-reduced for all $z\in\Sigma_n$.
\vskip1em

Choosing a basis $e$ for which each $e_i\in J^{\sigma_i}$ for some
$\sigma_i$, we have condition (3) in \Cref{def reduced}
by \Cref{asymptotics} since each weight filtration is centered around
0, while we may assume (2) as there are only finitely many orderings
of the basis.  By a result of Schmid \cite[Corollary 5.29]{Schmid} we
know \Cref{finitely many Siegel} is true in the $n=1$ case.  Since any Siegel set of $D$ is contained in finitely many Siegel
sets of $X$ by \Cref{orr}, it follows that $h_z$ is $(e,C_\tau)$-reduced for $p(z)$ restricted to any curve $\tau$
in $\Sigma_n$.  Taking \Cref{roughly two} into account, \Cref{check on curves} implies condition (1) for some fixed $C>0$ on all of $\Sigma_n$ and this completes
the proof.  
\end{proof}

\subsection{\Cref{definability} implies Borel's algebraicity theorem} \label{algebraic}

\begin{theor}\cite[Theor. 3.10]{Bor72}
Let $S$ be a complex algebraic variety and $f: S \lo S_{\Ga, G, K}$ a
complex analytic map to an arithmetic variety $S_{\Ga, G, K}$. Then
$f$ is algebraic.
\end{theor}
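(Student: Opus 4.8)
The plan is to derive Borel's theorem from \Cref{definability period map} and the o-minimal Chow theorem of Peterzil-Starchenko. First I would observe that the statement is local on $S$ and that, after resolving singularities if necessary, we may assume $S$ is smooth. A complex analytic map $f\colon S\to S_{\Ga,G,K}$ to an arithmetic \emph{variety} need not a priori be a period map, so the first task is to realize $f$ (or rather its definability) in the framework already established. The key point is that the target $S_{\Ga,G,K}$, being an arithmetic variety, carries \emph{two} $\RR_\an$-definable structures: the one coming from its complex algebraic (Baily-Borel) structure and the one coming from \Cref{definability} via the Borel-Serre compactification; by the discussion following \Cref{definability} the natural map $\ol{S_{\Ga,G,K}}^{BS}\to\ol{S_{\Ga,G,K}}^{BB}$ is $\RR_{\an,\exp}$-definable, so the algebraic $\RR_{\an,\exp}$-structure on $S_{\Ga,G,K}$ agrees with the one produced in \Cref{definability}. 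In particular the complex algebraic structure on $S_{\Ga,G,K}$ is compatible with the definable structure in which period maps land.

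Next I would argue that $f$ is $\RR_{\an,\exp}$-definable. Here one uses that an arithmetic variety $S_{\Ga,G,K}$ is a connected Hodge variety: it is the target of a tautological period map (one may take $\VV$ to be the variation underlying the Shimura/Hodge datum, e.g.\ the universal family in the Siegel case), and more to the point $f$ itself is governed by a PVHS on $S$ whose period map is essentially $f$. Invoking \Cref{definability period map} for this PVHS shows that $f\colon S\to S_{\Ga,G,K}$ is $\RR_{\an,\exp}$-definable with respect to the algebraic structures on both source and target. So we have a definable complex analytic map between complex algebraic varieties.

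Now I would apply Peterzil-Starchenko's o-minimal GAGA/Chow theorem (\Cref{PS} in the paper). Consider the graph $\Gamma_f\subset S\times S_{\Ga,G,K}$. It is a complex analytic subvariety (since $f$ is holomorphic), it is closed, and it is $\RR_{\an,\exp}$-definable because $f$ is. A closed definable complex analytic subvariety of a complex algebraic variety is algebraic by the o-minimal Chow theorem; hence $\Gamma_f$ is a closed algebraic subvariety of $S\times S_{\Ga,G,K}$. Since $\Gamma_f$ is the graph of a morphism, the projection $\Gamma_f\to S$ is an isomorphism, and therefore $f$, being the composite $S\xrightarrow{\sim}\Gamma_f\hookrightarrow S\times S_{\Ga,G,K}\to S_{\Ga,G,K}$, is a morphism of algebraic varieties. (If one only wants $f$ rational rather than regular, one can drop the properness discussion, but regularity follows since $f$ is everywhere defined and continuous.)

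The main obstacle is the identification step: making precise that the definability of an arbitrary complex analytic $f$ into an arithmetic variety follows from \Cref{definability period map}, i.e.\ that such an $f$ is literally a period map (up to the arithmetic subtleties of passing between $\G$, its adjoint group, and finite-index/finite-\'etale issues) and that the algebraic structure on $S_{\Ga,G,K}$ coming from Baily-Borel is the \emph{same} definable structure, once restricted to $S_{\Ga,G,K}$, as the Borel-Serre one used in \Cref{definability}. Granting the compatibility of these two structures (the $\RR_{\an,\exp}$-definability of $\ol{S}^{BS}\to\ol{S}^{BB}$), the remainder is a clean application of o-minimal Chow to the graph.
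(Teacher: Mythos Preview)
Your approach is essentially the paper's: recognize $f$ as a period map, invoke \Cref{definability period map} to obtain $\RR_{\an,\exp}$-definability, and then apply Peterzil--Starchenko's o-minimal Chow theorem to the graph of $f$ inside the complex algebraic manifold $S \times S_{\Gamma, G, K}$. The paper is much terser---it simply asserts ``the map $f$ is a period map'' and proceeds---whereas you spell out the compatibility between the Borel--Serre and Baily--Borel definable structures on $S_{\Gamma, G, K}$; this is a detail the paper leaves implicit, and note that it in fact follows from \Cref{definability period map} itself applied to the tautological PVHS on $S_{\Gamma, G, K}$ (the identity is then a definable bijection, hence bi-definable), so no extra input beyond \Cref{definability period map} and \Cref{PS} is actually needed.
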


\begin{proof}
The map $f$ is a period map, hence is $\RR_{\an, \exp}$-definable by
\Cref{definability period map}. The graph of $f$ is thus a complex
analytic, $\RR_{\an, \exp}$-definable, subset of the smooth complex
algebraic manifold $S \times S_{\Ga, G, K}$. Recall the
following o-minimal Chow theorem of Peterzil-Starchenko \cite[Theor. 4.4
and Corollary 4.5]{PS} (see also \cite[Theor. 2.2]{MPT} and \cite[Theor. 2.11]{Scanlon} for more precise versions), generalizing a result of
Fortuna-\L ojasiewicz \cite{FL} in the semi-algebraic case:

\begin{theor}(Peterzil-Starchenko) \label{PS}
Let $S$ be a smooth complex algebraic manifold (hence endowing the
$\CC$-analytic manifold $S$ with a canonical $\RR_\alg$-definable
manifold structure). Let $W \subset S$ be a complex analytic
subset which is also an $\cS$-definable subset for some o-minimal
structure $\cS$ expanding $\RR_{\an}$. Then $W$ is an algebraic subset
of $S$.
\end{theor}

It follows that the graph of $f$ is an algebraic subvariety of $S \times S_{\Ga, G, K}$, hence
that $f$ is algebraic (see \cite[Prop. 8]{Serre}).
\end{proof}

\section{Algebraicity of Hodge loci: proof of \Cref{algebraicity}}

\begin{proof}[\unskip\nopunct]
We refer to \cite[Section 3.1]{K17} for the notions of (connected)
Hodge datum and morphism of (connected) Hodge data, connected Hodge varieties and Hodge
morphisms of connected Hodge varieties. Notice that any connected
Hodge variety is in particular an arithmetic quotient and that any Hodge morphism of
connected Hodge varieties is in particular a morphism of arithmetic
quotients. 

A special subvariety $Y$ of the connected Hodge variety $S_{\Ga, G, M}$ is by
definition the image $Y:= f(S_{\Ga', G', M'})$ of some Hodge morphism $f: S_{\Ga', G', M'} \lo
S_{\Ga, G, M}$. It follows from \Cref{definability}(3) and the remark
above that any special subvariety of $S_{\Ga, G, M}$ is an $\RR_{\an, \exp}$-definable subset of
$S_{\Ga, G, M}$ (endowed with its $\RR_\an$-structure of
\Cref{definability}(1). The Hodge locus $\HL(S_{\Ga, G, M})$ is defined as
the (countable) union of special subvarieties of $S_{\Ga, G, M}$.

The Hodge locus $\HL(S, \VV)$ co\"incides with the preimage $\Phi_S^{-1}
(\HL(S_{\Ga, G, M}))$. Hence to prove \Cref{algebraicity} we are
reduced to proving that the preimage $W:= \Phi^{-1}(Y)$ of any special subvariety $Y
\subset S_{\Ga, G, M}$ is an algebraic subvariety of $S$. By \Cref{definability} the period map $\Phi_S: S \lo  S_{\Ga, G, M}$
is $\RR_{\an, \exp}$-definable. As $Y \subset S_{\Ga, G, M}$ is an $\RR_{\an, \exp}$-definable subset of
$S_{\Ga, G, M}$ it follows that $W=\Phi_S^{-1}(Y)$ is
an $\RR_{\an, \exp}$-definable subset of $S$ (in particular has
finitely many connected components). As $W$ is also a complex analytic
subvariety, the o-minimal Chow \Cref{PS} of Peterzil-Starchenko
implies that $W$ is an algebraic subvariety of $S$,
which finishes the proof of \Cref{algebraicity}.

\end{proof}

\begin{appendix}

\section{Real analytic manifolds with corners and definability}

\subsection{Real analytic manifolds with corners}
From the analytic point of view, the class of real analytic manifolds
with corners is natural: a compact real-analytic manifold with corners
is the real version of the compactification of a complex analytic manifold
by a normal crossing divisor. However this class of
manifolds has been poorly studied and even their definition is not 
universally agreed. We use the one given by \cite{Dou}, which has been
clarified and developed in \cite{Joyce}. For the convenience of the reader we recall the basic
definitions but we refer to \cite{Joyce} for more details. Notice
that Joyce works in the $C^\infty$ context, but all the definitions we
need translate literally to the real-analytic setting by replacing ``smooth'' with ``real-analytic''.

Let $X$ be a paracompact Hausdorff topological space $X$ and $n\geq 1$
an integer. An $n$-dimensional chart with corners on $X$ is a pair 
$(U, \varphi)$ where $U$ is an open subset in $\RR^n_k:= \RR_{\geq 0}^k
\times \RR^{n-k}$ for some $0 \leq k \leq n$ and $\varphi: U \lo X$ is a homeomorphism with a
non-empty open set $\varphi(U)$.

Given $A \subset \RR^m$ and $B \subset \RR^n$ and $\alpha:A \lo B$
continuous, we say that $\alpha$ is real-analytic if it extends to a
real-analytic map between open neighborhoods of $A$, $B$.

Two $n$-dimensional charts with
corners $(U, \varphi)$, $(V, \psi)$ on $X$ are said real-analytically compatible if $\psi^{-1}
\circ \varphi: \varphi^{-1}(\varphi(U) \cap \psi(V)) \lo
\psi^{-1}(\varphi(U) \cap \psi(V))$ is a homeomorphism and $\psi^{-1}
\circ \varphi$ (resp. its inverse) are real-analytic in the sense
above.

An $n$-dimensional real analytic atlas with corners for $X$ is a system $\{(U_i,
\varphi_i): i \in I\}$ of pairwise real-analytically compatible charts
with corners on $X$ with $X= \bigcup_{i\in I} \varphi_i(U_i)$. We call
such an atlas maximal if is not a proper subset of any other
atlas. Any atlas is contained in a unique maximal atlas: the set of
all charts with corners $(U, \varphi)$ on $X$ compatible with $(U_i,
\varphi_i)$ for all $i \in I$. 

A real-analytic manifold with corners of dimension $n$ is a paracompact Hausdorff
topological $X$ equipped with a maximal $n$-dimensional real-analytic
atlas with corners. Weakly real-analytic maps between real-analytic
manifolds with corners are the continuous maps which are real-analytic
in charts (cf. \cite[def. 3.1]{Joyce}, where a stronger notion of
real-analytic map is also defined; we won't need this strengthened notion).

Given $X$ a real-analytic $n$-manifold with corners, one defines its
boundary $\partial X$ (cf. \cite[def. 2.6]{Joyce}. This is a
real-analytic $n$-manifold with corners for $n>0$, endowed with an
immersion (not necessarily injective) $i_X: \partial X
\lo X$ (cf. \cite[prop.2.7]{Joyce}) which is real-analytic
(\cite[Theor. 3.4.(iv)]{Joyce}) in particular weakly real-analytic.

\subsection{$\cR$-definable manifolds with corners} \label{definable corner}
Let $\cR$ be any fixed o-minimal expansion of $\RR$. The notion of
$\cR$-definable manifold is given in \cite[chap.10]{VDD} and in
\cite[p.507]{VdM2}. We will need the extended notion of
$\cR$-definable manifold with corners.

Let $X$ be a paracompact Hausdorff
topological space $X$. An $n$-dimensional chart with
corners $(U, \varphi)$ on $X$ is said to be $\cR$-definable if $U$ is an $\cR$-definable subset of
$\RR^n$ (equivalently: of $\RR^n_k$).

Two $n$-dimensional $\cR$-definable charts with
corners $(U, \varphi)$, $(V, \psi)$ on $X$ are said $\cR$-compatible if $\psi^{-1}
\circ \varphi: \varphi^{-1}(\varphi(U) \cap \psi(V)) \lo
\psi^{-1}(\varphi(U) \cap \psi(V))$ is an $\cR$-definable
homeomorphism between $\cR$-definable subsets 
$\varphi^{-1}(\varphi(U) \cap \psi(V))$ and $\psi^{-1}(\varphi(U) \cap
\psi(V))$ of $\RR^n$. 

An $n$-dimensional $\cR$-definable atlas with corners for $X$ is a system $\{(U_i,
\varphi_i): i \in I\}$, {\em $I$ finite}, of pairwise $\cR$-compatible $\cR$-definable charts
with corners on $X$ with $X= \cup_{i\in I} \varphi_i(U_i)$. Two such
atlases $\{(U_i, \varphi_i): i \in I\}$ and$\{(V_j,
\psi_j): j \in J\}$  are said $\cR$-equivalent if all the ``mixed''
transition maps $\psi_j \circ \varphi_i^{-1}$ are $\cR$-definable.

An $\cR$-definable manifold with corners of dimension $n$ is a paracompact Hausdorff
topological $X$ equipped with an $\cR$-equivalence class of $n$-dimensional $\cR$-definable
atlas with corners.

\begin{rem}
Notice that the definitions of real-analytic manifold with corners
and $\cR$-definable manifold with corners are parallel,
except the crucial fact that we work in a strictly finite setting for
$\cR$-definable manifolds: the set $I$ of charts has to be finite. This finiteness
condition, in addition to the definability condition, ensures the
tameness at infinity of the $\cR$-definable manifolds with corners.
\end{rem}

We say that a subset $Z\subset X$ is $\cR$-definable (resp. open or closed)
if $\varphi_i^{-1}(Z \cap \varphi_i(U_i))$ is an $\cR$-definable (resp. open or closed) subset
of $U_i$ for all $i \in I$. An $\cR$-definable map between 
$\cR$-definable manifolds (with corners) is a map whose graph is an $\cR$-definable subset of the
$\cR$-definable product manifold (with corners).

\subsection{Compact real-analytic manifolds with corners are
$\RR_{\an}$-definable}

\begin{prop} \label{ManifoldWC}
Let $X$ be a compact real-analytic $n$-manifold with corners. Then $X$ has
a natural structure of $\RR_{\an}$-definable manifold with
corners. Moreover the map $i_X: \partial X \lo X$ is
$\RR_{an}$-definable. In particular the interior $X \setminus
i_X(\partial X)$ is an $\RR_{\an}$-definable manifold.
\end{prop}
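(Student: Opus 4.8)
The plan is to produce a \emph{finite} $\RR_{\an}$-definable atlas with corners on the compact real-analytic manifold with corners $X$ from an arbitrary real-analytic atlas, and then check that the boundary immersion $i_X$ becomes definable. First I would invoke compactness to extract from the maximal real-analytic atlas on $X$ a finite subatlas $\{(U_i,\varphi_i)\}_{i\in I}$, $I$ finite. The charts $U_i\subset\RR^n_{k_i}$ may be shrunk: since each $\varphi_i(U_i)$ is open and $X$ is compact, one can pick relatively compact semi-algebraic (e.g. cubical) open subsets $U_i'\Subset U_i$ whose images still cover $X$. This is the key reduction: after replacing $U_i$ by $U_i'$, every transition map $\varphi_j^{-1}\circ\varphi_i$ is, by definition of real-analyticity on sets with corners, the restriction to $\varphi_i^{-1}(\varphi_i(U_i')\cap\varphi_j(U_j'))$ of a real-analytic map defined on an open neighborhood of the closure $\overline{U_i'}$, which is compact. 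A real-analytic map on a neighborhood of a compact set, restricted to a bounded definable set, is $\RR_{\an}$-definable (it agrees on the compact piece with a restricted analytic function, up to an affine rescaling of coordinates into $[0,1]^n$, which is semi-algebraic). Hence all transition maps are $\RR_{\an}$-definable, the $U_i'$ are semi-algebraic, and $\{(U_i',\varphi_i)\}_{i\in I}$ is a finite $\RR_{\an}$-definable atlas with corners; the naturality (independence of choices up to $\RR_{\an}$-equivalence) follows because any two such finite shrinkings are refined by a common one, with all mixed transition maps again restricted-analytic.

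For the second assertion, recall from \cite[def.~2.6, prop.~2.7, Theor.~3.4(iv)]{Joyce} that $\partial X$ is itself a compact real-analytic $(n-1)$-manifold with corners (compactness of $\partial X$ follows from compactness of $X$ since $i_X$ is proper, being a real-analytic immersion of a manifold with corners whose image is a closed subset of the compact $X$). By the first part, $\partial X$ carries a natural finite $\RR_{\an}$-definable atlas with corners; choosing the charts on $\partial X$ to be the boundary faces of the chosen charts $U_i'\subset\RR^n_{k_i}$ — i.e. slices where one of the first $k_i$ coordinates vanishes — makes $i_X$ agree, in coordinates, with a coordinate inclusion $\RR^{n-1}_{k_i-1}\hookrightarrow\RR^n_{k_i}$ followed by $\varphi_i$, which is manifestly $\RR_{\an}$-definable. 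Since there are finitely many such faces over finitely many charts, $i_X\colon\partial X\to X$ is $\RR_{\an}$-definable. Its image $i_X(\partial X)$ is then a definable closed subset, so the interior $X\setminus i_X(\partial X)$ is an open $\RR_{\an}$-definable submanifold (without corners), giving the final claim.

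The main obstacle I anticipate is purely bookkeeping rather than conceptual: verifying that the notion of "real-analytic on a set with corners" used in the definition genuinely supplies a real-analytic extension to a neighborhood of the \emph{closure} of the shrunk chart domain, so that the restricted-analytic-function mechanism applies; and, relatedly, keeping track of the corner strata so that the mixed transition maps between two different finite shrinkings are still covered by restricted analytic functions (which is what gives canonicity). Once the shrinking-to-relatively-compact-semialgebraic-charts step is in place, every other step is a routine unwinding of the definitions of $\RR_{\an}$ and of manifolds with corners from the appendix.
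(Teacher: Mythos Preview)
Your proposal is correct and follows essentially the same approach as the paper: shrink to finitely many relatively compact semi-algebraic (the paper says semi-analytic) chart domains so that transition maps become restricted analytic, then use compactness of $\partial X$ and weak real-analyticity of $i_X$ to conclude. The paper's proof is slightly terser---it picks relatively compact charts pointwise before extracting a finite cover, and simply asserts that real-analytic compatibility ``implies immediately'' $\RR_{\an}$-compatibility---but the content is the same, and the bookkeeping issue you flag is exactly what the paper leaves implicit.
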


\begin{proof}
For each point $x$ of $X$ choose $\varphi_x: U_x \lo (X, x)$ a real-analytic
chart with corners whose image $\varphi(U_x)$ is a neighborhood of
$x$. Without loss of generality we can assume that $U_x \subset \RR^n_k$ is relatively
compact and semi-analytic, hence $\RR_{\an}$-definable. Hence $(U_x,
\varphi_x)$ is a real-analytic chart with corners for $X$ which is
also an $\RR_\an$-definable chart with corners for $X$. 

Fix $x, y$ two points in $X$. The fact that the two real-analytic charts $(U_x, \varphi_x)$ and $(U_y,
\varphi_y)$ are real-analytically compatible implies immediately that
they are $\RR_{\an}$-compa\-tible.

The space $X$ is compact hence one can extract from the covering
family $\{(U_x, \varphi_x),\; x \in X\}$ a finite subfamily $\{ (U_i,
\varphi_i), \; i \in I\}$, such that $X = \bigcup_{i\in I}
\varphi_i(U_i)$: this is an $n$-dimensional
$\RR_{\an}$-definable atlas with corners for $X$, which defines a
structure of $\RR_\an$-definable manifold with corners on $X$.

One easily checks that this structure is independent of the choice of
the finite extraction $\{ (U_i,
\varphi_i), \;i \in I\}$ of $\{(U_x, \varphi_x), \;x \in X\}$, and
also of the choice of the relatively compact and semi-analytic subsets
$U_x$.

Hence $X$ has a natural structure of $\RR_{\an}$-definable manifold with
corners. The same procedure endows the compact real-analytic $(n-1)$-manifold
with corners $\partial X$ with a natural $\RR_\an$-definable
structure. The fact that $i_X: \partial X \lo X$ is weakly
real-analytic implies immediately that $i_X$ is $\RR_\an$-definable
and that the manifold $X \setminus i_X(\partial X)$ is $\RR_\an$-definable.
\end{proof}
\end{appendix}

\sspace
\noindent Benjamin Bakker : University of Georgia

\noindent email : \texttt{bakker.uga@gmail.com}.

\sspace
\noindent Bruno Klingler : Humboldt Universit\"at zu Berlin

\noindent email : \texttt{bruno.klingler@hu-berlin.de}.

\sspace
\noindent Jacob Tsimerman : University of Toronto

\noindent email : \texttt{jacobt@math.toronto.edu}.

\end{document}